\newif\iffinal
\else\usepackage[notref,notcite]{showkeys}\fi
\newenvironment{enumeratea}{\begin{enumerate}[\upshape (a)]}{\end{enumerate}}
\numberwithin{equation}{section}
\numberwithin{figure}{section}
\numberwithin{table}{section}
\renewcommand{\leq}{\leqslant} 
\renewcommand{\geq}{\geqslant}
\newcommand{\eps}{\varepsilon}
\newcommand{\set}[1]{\left\{#1\right\}}
\def\qed{ \hfill $\blacksquare$}  
\newcommand{\cE}{\mathcal{E}}
\newcommand{\cG}{\mathcal{G}}\newcommand{\cH}{\mathcal{H}}\newcommand{\cI}{\mathcal{I}}
\newcommand{\cT}{\mathcal{T}}
\newcommand{\bR}{\mathbb{R}}
\newtheorem{theorem}{Theorem}[section]
\newtheorem{corollary}[theorem]{Corollary}
\newtheorem{lemma}[theorem]{Lemma}
\newtheorem{proposition}[theorem]{Proposition}
\theoremstyle{definition}
\newtheorem{definition}[theorem]{Definition}
\theoremstyle{remark}
\newtheorem{remark}[theorem]{Remark}
\numberwithin{equation}{section}
\newcommand{\no}{\nonumber}
\newcommand{\be}{\begin{equation}}
\newcommand{\ee}{\end{equation}}
\newcommand{\bi}{\begin{itemize}}
\newcommand{\ei}{\end{itemize}}
\newcommand{\br}{\begin{eqnarray}}
\newcommand{\er}{\end{eqnarray}}
\newcommand{\Rm}{{\mathbb R}}
\newcommand{\ProbP}{\mathbb{P}}
\newcommand{\ProbQ}{\mathbb{Q}}
\newcommand{\Mean}{\mathbb{E}}
\newcommand{\indc}{{\bf1}}
\newcommand{\calE}{\mathcal{E}}
\newcommand{\calF}{\mathcal{F}}
\newcommand{\calG}{\mathcal{G}}
\newcommand{\calH}{\mathcal{H}}
\newcommand{\calI}{\mathcal{I}}
\newcommand{\calO}{\mathcal{O}}
\newcommand{\calT}{\mathcal{T}}
\newcommand{\calW}{\mathcal{W}}
\newcommand{\argsup}[1]{\arg\!\sup_{#1}}
\newcommand{\arginf}[1]{\arg\!\inf_{#1}}
\newcommand{\commentout}[1]{}
\newcommand{\ER}{Erd\H{o}s-R\'enyi }
\begin{document}

\title[IS for rare events in Erd\H{o}s-R\'enyi graphs]{The importance sampling technique for understanding rare events in Erd\H{o}s-R\'enyi random graphs}

\date{}
\subjclass[2010]{Primary: 65C05, 05C80, 60F10. }
\keywords{\ER random graphs, exponential random graphs, rare event simulation, large deviations, graph limits}

\author[Bhamidi]{Shankar Bhamidi$^1$}
\address{$^1$Department of Statistics and Operations Research, 304 Hanes Hall, University of North Carolina, Chapel Hill, NC 27599}
\author[Hannig]{Jan Hannig$^2$}
\address{$^2$Department of Statistics and Operations Research, 330 Hanes Hall, University of North Carolina, Chapel Hill, NC 27599}
\author[Lee]{Chia Ying Lee$^3$}
\address{$^3$Statistical and Applied Mathematical Sciences Institute, 19 T.W. Alexander Drive, P.O. Box 14006,Research Triangle Park, NC 27709, USA.}
\author[Nolen]{James Nolen$^4$}
\address{Mathematics Department, Duke University, Box 90320, Durham, North Carolina, 27708, USA}
\email{bhamidi@email.unc.edu, hannig@email.unc.edu, leecy@email.unc.edu, nolen@math.duke.edu }

\maketitle

\begin{abstract}
  In dense \ER random graphs, we are interested in the events where large numbers of a given subgraph occur. The mean behavior of subgraph counts is known, and only recently were the related large deviations results discovered. Consequently, it is natural to ask, can one develop efficient numerical schemes to estimate the probability of an \ER graph containing an excessively large number of a fixed given subgraph? Using the large deviation principle we study an importance sampling scheme as a method to numerically compute the small probabilities of large triangle counts occurring within \ER graphs. We show that the exponential tilt suggested directly by the large deviation principle does not always yield an optimal scheme.
The exponential tilt used in the importance sampling scheme comes from a generalized class of exponential random graphs. Asymptotic optimality, a measure of the efficiency of the importance sampling scheme, is achieved by a special choice of the parameters in the exponential random graph that makes it indistinguishable from an \ER graph conditioned to have many triangles in the large network limit. We show how this choice can be made for the conditioned \ER graphs both in the replica symmetric phase as well as in parts of the replica breaking phase to yield asymptotically optimal numerical schemes to estimate this rare event probability. 
\end{abstract}

\section{Introduction}

In this paper we study the use of importance sampling schemes to numerically estimate the probability that an \ER random graph contains an unusually large number of triangles. A simple graph $X$ on $n$ vertices can be represented as an element of the space $\Omega_n = \set{0,1}^{\binom{n}{2}}$. A graph $X \in \Omega_n$ will be denoted by $X = (X_{ij})_{1 \leq i < j \leq n}$ with the entry $X_{ij}$ indicating the presence or absence of an edge between vertices $i$ and $j$.   For a given edge probability $p\in [0,1]$, an \ER random graph $\calG_{n,p}$ is a graph on $n$ vertices such that any edge is independently connected with probability $p$.
We shall use $\ProbP_{n,p}$ to represent the probability measure on $\Omega_n$ induced by the \ER graph $\calG_{n,p}$.   The probability of a fixed graph $X$ under the measure $\ProbP_{n,p}$ can be explicitly computed as
\begin{equation} \label{eq:Pnpdefn}
  \ProbP(G_{n,p}=X) = \ProbP_{n,p} (X) \, = \prod_{i<j} p^{X_{ij}} (1-p)^{1-X_{ij}} 
  \,= \, (1 - p)^{\binom{n}{2}}\,e^{h_p E(X)}
\end{equation} 
where 
\begin{equation} \label{eq:h_p}
  h_p := \log \frac{p}{1-p}
\end{equation}
and $E(X) := \sum_{i<j} X_{ij}$ is the number of edges in $X$. Let $T(X)$ denote the number of triangles in graph $X$:
\[
T(X) = \sum_{1 \leq i<j<k \leq n} X_{ij} X_{jk} X_{ik}.
\]
Also let the event $W_{n,t} = \set{X \in \Omega_n \;| \;\; T(X)\geq \binom{n}{3}t^3}$ denote the upper tails of triangle counts.
 Consider an \ER random graph $\calG_{n,p}$. For $p$ fixed, one can show that $\Mean[T(\cG_{n,p})] \sim {n \choose 3} p^3 $ as $n \to \infty$.  For $t>p$, the main aim of this paper is the following question: can one develop efficient numerical schemes to estimate the probability 
\be
\mu_{n} = \ProbP \left( T(\calG_{n,p}) \geq \binom{n}{3} t^3 \right) \label{smallprob1}
\ee
that $\calG_{n,p}$ has an atypically large number of triangles? Before addressing such questions, one first needs to understand the structure of such random graphs, conditioned on this rare event, more precisely the large deviation rate function for such events. The last few years have witnessed a number of deep results in understanding such questions including upper tails of triangle counts, along with more general subgraph densities (see e.g., \cites{Borgsetal08, Cha12, ChaDey10, ChaVar11, DeMKahn12, KimVu04, LubZhao12}). 
In the dense graph case, where the edge probability $p$ stays fixed as $n \rightarrow \infty$, \cite{ChaDey10} derived a large deviation principle (LDP) for the rare event $\{ T(\calG_{n,p}) \geq \textstyle{\binom{n}{3}} t^3 \}$, showing that for $t$ within a certain subset of $(p,1]$, 
\begin{equation} 
  \ProbP \left( T(\calG_{n,p}) \geq \binom{n}{3} t^3 \right) = \exp\left({-n^2 I_p(t)(1 + O(n^{-1/2})) }\right) \label{CDldp}
\end{equation}
where the rate function $I_p(t)$ is given by
\begin{equation} \label{Iprate}
I_p(t) = \frac{1}{2}\left(t \log \frac{t}{p} + (1 - t) \log \frac{1 - t}{1 - p} \right).
\end{equation}
More recently \cite{ChaVar11} showed a general large deviation principle for dense \ER graphs, using the theory of limits of dense random graph sequences developed recently by Lovasz et al. \cite{Lovasz2012monograph, LovSze07, LovSze06, Borgsetal08}.
When specialized to upper tails of triangle counts, they show that there exists a rate function $\phi(p,t)$
\begin{equation}
\label{eqn:cha-varadhan}
    \frac{1}{n^2} \log{\ProbP \left( T(\calG_{n,p}) \geq \binom{n}{3} t^3 \right)} \to -\phi(p,t), \qquad \mbox{ as } n\to \infty.
\end{equation}
The function $\phi(p,t)$ coincides with $I_p(t)$ for a certain parameter range of $(p,t)$, and is described in more detail in \eqref{eq: LDP rate runction}. The exponential decay of the probability of the event of interest makes it difficult to estimate this probability even for moderately large $n$. Direct Monte Carlo sampling is obviously intractable.
The central strategy of importance sampling is to sample from a different probability measure, the \emph{tilted measure}, under which the event of interest is no longer rare; one obtains more successful samples falling in the event of interest but each sample must then be weighted appropriately according to the Radon-Nikodym derivative of the original measure against the tilted measure.
Importance sampling techniques have been used in many other stochastic systems, such as SDEs and Markov processes and queuing systems, see e.g \cite{Buck,rubino2009rare,juneja2006rare,dupuis2004importance,blanchet2008efficient} and the references therein.
In particular, when a large deviations principle is known for the stochastic system, the tilted measure commonly used is a change of measure arising from the LDP.
However, not every tilted measure associated with the LDP works well.
It is well known that a poorly chosen tilted measure can lead to an estimator that performs worse than Monte Carlo sampling, or whose variance blows up \cite{GlasWang97}.
Thus, a careful choice of tilted measure is of utmost importance.  Before describing the relevant tilts we formally define our aims. 

\subsection{Importance sampling and asymptotic optimality} 
\label{sec:imp-samp-opt}
If $\{X^k\}_{k=1}^\infty \subset \Omega_n$ is a sequence of \ER random graphs generated independently from $\ProbP_{n,p}$, then for any integer $K \geq 1$,
\[
M_K =  \frac{1}{K} \sum_{k=1}^K \indc_{W_{n,t}}(X^k) 
\]
is an unbiased estimate of $\mu_{n}$. By the law of large numbers, $M_K \to \mu_{n}$ with probability one as $K \to \infty$. Although this estimate of $\mu_{n}$ is very simple, the relative error is
\[
\frac{\sqrt{\text{Var}(M_K)}}{\Mean(M_K)} = \frac{\sqrt{\mu_{n} - (\mu_{n})^2}}{\mu_{n} \sqrt{K}},
\]
which scales like $(K \mu_{n})^{-1/2}$ as $\mu_{n} \to 0$. Hence the relative error may be very large in the large deviation regime where $\mu_{n} <\!< 1$, unless we have at least $K \sim O(\mu_{n}^{-1})$ samples. Therefore, it is desirable to devise an estimate of $\mu_{n}$ which, compared to this simple Monte Carlo estimate, attains the same accuracy with fewer number of samples or lower computational cost.

Importance sampling is a Monte Carlo algorithm based on a change of measure. Suppose that $\ProbP_{n,p}$ is absolutely continuous with respect to another measure $\ProbQ$ on $\Omega_n$ with
\[
\frac{d\ProbP_{n,p}}{d \ProbQ} = Y^{-1} : \Omega_n \to \Rm.
\]
Then we have
\be
\mu_{n} = \Mean[M_K] =  \Mean \left[ \frac{1}{K} \sum_{k=1}^K \indc_{W_{n,t}}(X^k) \right] = \Mean_{\ProbQ} \left[ \frac{1}{K} \sum_{k=1}^K \indc_{W_{n,t}}(\tilde X^k) Y^{-1}(\tilde X^k) \right] \label{eq: measchange}
\ee
where $\Mean_{\ProbQ}$ denotes expectation with respect to $\ProbQ$, and we now use $\{\tilde X^k\}_{k=1}^\infty$ to denote a set of random graphs sampled independently from the new measure $\ProbQ$.  If we define
\be
\tilde M_K = \frac{1}{K} \sum_{k=1}^K \indc_{W_{n,t}}(\tilde X^k) Y^{-1}(\tilde X^k),  \label{Mkimp}
\ee
then $\tilde M_K$ is also an unbiased estimate of $\mu_{n}$, and the relative error is now:
\be
\frac{\sqrt{\text{Var}_{\ProbQ}(M_K)}}{\Mean_{\ProbQ}(M_K)} =  \frac{\sqrt{\Mean_{\ProbQ}[(\indc_{W_{n,t}}(X)Y^{-1})^2 ]- (\mu_{n})^2}}{\mu_{n} \sqrt{K}}, \label{eq: relative error}
\ee
Formally this is optimized by the choice $Y = (\mu_{n})^{-1} \indc_{W_{n,t}}(X)$, in which case the relative error is zero. Such a choice for $\ProbQ$ is not feasible, however, since normalizing $Y$ would require a priori knowledge of $\mu_{n} = \ProbP_{n,p}(W_{n,t})$. 
Intuitively, we should choose the tilted measure $\ProbQ$ so that $\tilde X_k \in W_{n,t}$ occurs with high probability under $\ProbQ$.

We will refer to $Y^{-1}$ as the importance sampling weights, and $\ProbQ$ as the {\em tilted measure}, or tilt. If $\ProbQ$ arises naturally as the measure induced by a random graph $\calG_{n}$, we will also refer to $\calG_{n}$ as the tilt. In the cases where a large deviation principle holds, it gives us an estimate of the relative error in the estimate $\tilde M_K$. For any fixed $K$, it is clear from (\ref{eq: relative error}) that minimizing the relative error is equivalent to minimizing the second moment $\Mean_{\ProbQ_{n}}[(\indc_{\calW_t}Y^{-1})^2]$.
Since Jensen's inequality implies that 
\begin{equation*}
  \Mean_{\ProbQ_{n}} [(\indc_{\calW_t}Y^{-1})^2] 
  \geq 
  \Mean_{\ProbQ_{n}} [\indc_{\calW_t}Y^{-1}]^2,
\end{equation*}
we have the following asymptotic lower bound:
\begin{equation} \label{eq: Jensen's asymptotic lower bound}
  \liminf_{n\rightarrow\infty} \frac{1}{n^2} \log \Mean_{\ProbQ_{n}} [(\indc_{\calW_t}Y^{-1})^2] \geq  - 2\phi(p,t).
\end{equation}
Thus, the presence of a large deviation principle for the random graphs $\calG_{n,p}$ as $n \to \infty$, leads to a way to quantify the efficiency of the importance scheme in an asymptotic sense, as is done in other contexts \cite{Buck}. 

\begin{definition} \label{def:asympEff}
 A family of tilted measures $\ProbQ_n$ on $\calW$ is said to be {\em asymptotically optimal} if 
\begin{equation*}
  \lim_{n\rightarrow\infty} \frac{1}{n^2} \log \Mean_{\ProbQ_{n}} [(\indc_{\calW_t}Y^{-1})^2] = - 2 \inf_{f\in\calW_t} [\mathcal{I}_p(f)].
\end{equation*}
\end{definition}
In contrast, the second moment of each term in the simple Monte Carlo method satisfies
\begin{equation*}
  \lim_{n\rightarrow\infty} \frac{1}{n^2} \log \Mean_{\ProbP_{n}} [\indc_{\calW_t}^2]
 = -\phi(p,t) > -2\phi(p,t).
\end{equation*}
Thus, the simple Monte Carlo method is not asymptotically optimal.
Observe that Jensen's inequality for conditional expectation implies
\br
\ProbQ_n(\calW_t)^{-1} & = &  \ProbP_n(\calW_t)^{-1} \left( \frac{\Mean_{\ProbP_{n}}( \indc_{\calW_t} Y )}{\ProbP_n(\calW_t)}\right)^{-1} \no \\
& \leq & \ProbP_n(\calW_t)^{-2} \Mean_{\ProbP_{n}}( \indc_{\calW_t} Y^{-1}  ) =  \ProbP_n(\calW_t)^{-2} \Mean_{\ProbQ_{n}}( \indc_{\calW_t} Y^{-2}  ).
\er
So, if $\ProbQ_n$ is asymptotically optimal, we must have
\br
\liminf_{n \to \infty} \frac{1}{n^2} \log \ProbQ_n(\calW_t) \geq \liminf_{n \to \infty} \frac{2}{n^2} \log \ProbP_n(\calW_t) + \liminf_{n \to \infty} \frac{-1}{n^2} \log \Mean_{\ProbQ_{n}}( \indc_{\calW_t} Y^{-2}  ) = 0,
\er
which is consistent with the intuition that a good choice of $\ProbQ_n$ should put $\tilde X_k \in \calW_t$ with high probability. 

To understand in this context the tilts that could be relevant, let us now describe in a little more detail, properties of the rate function \ref{eqn:cha-varadhan} as well as structural results of the \ER model conditioned on rare events and their connections to a sub-family of the famous exponential random graph models.

\subsection{Edge and triangle tilts}
\label{sec:intro-edge-tilt}

In this article we consider tilted measures within a family of exponential random graphs $\calG_{n}^{h,\beta,\alpha}$. For parameters $h\in \mathbb{R}$, $\beta \geq 0$, and $\alpha > 0$, these exponential random graphs are defined via the Gibbs measure, $\ProbQ_n = \ProbQ_{n}^{h,\beta,\alpha}$ on the space of simple graphs on $n$ vertices, where
\be
\ProbQ_n^{h,\beta,\alpha} (X) \propto e^{H(X)}, \qquad
\text{where }
H(X) = hE(X) + \frac{\beta}{n} \left(\frac{n^3}{6}\right)^{1-\alpha} T(X)^{\alpha}. \label{expGibbsdef}
\ee
$E(X)$ is the number of edges in graph $X$.  If $\beta = 0$, and $h = h_q = \log \frac{q}{1-q}$ for some $q \in (0,1)$, then $\calG_{n}^{h_q,0,\alpha}$ is an \ER graph with edge probability $q$ (notice that $\alpha$ is irrelevant when $\beta = 0$). In particular, the original graph $\calG_{n,p}$ is an exponential random graph with parameters $h = h_p$ and $\beta = 0$.

Given the rare event problem, $\calG_{n,p}$ conditioned on $T(\calG_{n,p})\geq \binom{n}{3}t^3$, which we shall henceforth parameterize by $(p,t)$, we will focus on two strategies for choosing the tilted measure. The first is to set $\beta = 0$ and $h = h_q$ for some $q > p$. The resulting tilted measure $\ProbQ_{n}^{h_q,0,\alpha}$ will be called an \emph{edge tilt}; compared to the original measure for $\calG_{n,p}$, this tilt simply puts more weight on edges. The second strategy is to set $h = h_p$ but vary $\beta > 0$ and $\alpha > 0$.  We refer to the resulting tilted measure $\ProbQ_{n}^{h_p,\beta,\alpha}$ as a \emph{triangle tilt}; compared to the original measure, this tilt puts more weight on triangles, while leaving $h = h_p$ unchanged.

That the two tilts above are natural candidates for the importance sampling scheme, can be reasoned in light of the following concept of when two graphs are alike. In \cite{ChaVar11} it is shown that for the range of $(p,t)$ where one has \eqref{CDldp}, the \ER graph $\calG_{n,p}$ conditioned on the rare event $\{T(\calG_{n,p}) \geq \binom{n}{3} t^3\}$ is \emph{asymptotically indistinguishable} from another \ER graph $\calG_{n,t}$ with edge probability $t$, in the sense that the typical graphs in the conditioned \ER graph resembles a typical graph drawn from $\calG_{n,t}$ when $n$ is large. (Asymptotic indistinguishability is explained more precisely at \eqref{eq: asymp indistinguishable}.) Thus, choosing the tilted measure to resemble the typical conditioned graph is a natural choice. While it may seem plausible for any $t > p$ that the conditioned graph resembles another \ER graph, since $\Mean [T(\calG_{n,t})] \sim \binom{n}{3} t^3$ as $n \to \infty$, it is not always the case. Depending on $p$ and $t$, it may be that the graph $\calG_{n,p}$ conditioned on the event $\{T(\calG_{n,p}) \geq \binom{n}{3} t^3\}$ tends to form cliques and hence does not resemble an \ER graph. When the conditioned graph does resemble an \ER graph, we say that $(p,t)$ is in the \emph{replica symmetric} phase. On the other hand, when the conditioned graph is not asymptotically indistinguishable from an \ER graph we say that $(p,t)$ is in the \emph{replica breaking} phase. (See Definition \ref{def:replica symmetric}.)

The main question we wish to address is: given the parameters $(p,t)$ for the rare event problem, how can we choose the tilt parameters ($h_q$ for the edge tilt, or $\beta$ and $\alpha$ for the triangle tilt) so that the resulting importance sampling scheme is asymptotically optimal? And, can an optimal importance sampling scheme be constructed for all values of $(p,t)$?

Regarding the edge tilt, our first result (Prop \ref{prop:edgeNo}) is that the edge tilt $\ProbQ_{n}^{h_q,0,\alpha}$ can be asymptotically optimal only if $h_q = h_t$ (i.e. $q = t$). This is not very surprising since $\Mean[T(\calG_{n,q})] \sim {n \choose 3} q^3$.  On the other hand, we also will prove, in Proposition \ref{prop:edgeNoOpt}, the more surprising result that for some values of $(p,t)$ the importance sampling scheme based on the edge tilt $\ProbQ_{n}^{h_t,0,\alpha}$ will not be asymptotically optimal. In particular, there is a subregime of the replica symmetric phase for which the edge tilt with $h = h_t$ produces a suboptimal estimator.

\commentout{
Intuitively, the tilted measure for an asymptotically optimal importance sampling scheme should closely resemble the original measure conditioned on the rare event of interest; that is, if the sampling scheme is to be asymptotically optimal, we should expect that samples from the tilted measure resemble the original graph $\calG_{n,p}$ conditioned to have at least ${n \choose 3}t^3$ triangles. If we choose $h = h_t$, the edge tilt $\ProbQ_{n}^{h_t,0,\alpha}$ is not precluded from being asymptotically optimal for those values of $(p,t)$ where the original graph $\calG_{n,p}$ conditioned to have at least ${n \choose 3}t^3$ triangles resembles the tilted graph $\calG_{n,t}$, a consequence of Prop \ref{prop:edgeNo}.
}

Regarding the triangle tilt $\ProbQ_{n}^{h_p,\beta,\alpha}$, our main result (Prop \ref{prop: asymptotic optimality}) is a necessary and sufficient condition on the tilt parameters for the resulting importance sampling scheme to be asymptotically optimal.
Moreover, optimality can be achieved by a triangle tilt for every $(p,t)$ in the replica symmetric phase, and even for some choices of $(p,t)$ in the replica breaking phase, as we will show in Section \ref{sec:characterizeRegimes}.
Thus, the triangle tilt succeeds where the edge tilt fails, because the former appropriately penalizes samples with an undesired number of triangles, whereas the latter inappropriately penalizes samples with an undesired number of edges. 
As mentioned in the preceding paragraph, a crucial property to be expected of such an optimal triangle tilt is that samples from the tilted measure resemble the original graph $\calG_{n,p}$ conditioned to have at least ${n \choose 3}t^3$ triangles.
This is indeed the case for the optimal triangle tilt, thanks to Theorem \ref{thm: Exponential graph free energy}. 

Finally, we remark that Theorem \ref{thm: Exponential graph free energy} draws the connection between an exponential random graph and a conditioned \ER graph, indicating how the parameters for the two graphs must be related in order for them to resemble each other. This relationship arises from the fact that the free energy of the exponential random graph can be expressed in a variational formulation involving the LDP rate function for the conditioned \ER graph. For $(p,t)$ in the replica symmetric phase, this connection has been observed by Chatterjee and Dey \cite{ChaDey10}, Chatterjee and Diaconis \cite{ChaDia11}, and Lubetzky and Zhao \cite{LubZhao12}. In this paper, Theorem \ref{thm: Exponential graph free energy} generalizes this connection to include parameters $(p,t)$ in the replica breaking phase.

\vspace{0.2in}

{\bf Organization of the paper:}
 We start by giving precise definitions of the various constructs arising in our study in Section \ref{sec:def}. This culminates in Theorem \ref{thm: Exponential graph free energy} that characterizes the limiting free energy of the exponential random graph model. The rest of Section 2 is devoted to drawing a connection between the exponential random graph and \ER random graph conditioned on an atypical number of triangles, leading to the derivation of the triangle tilts. Section \ref{sec:asympEff} discusses and proves our main results on asymptotic optimality or non-optimality of the importance sampling estimators. An explicit procedure for determining the optimal triangle tilt parameters, given $(p,t)$, is present in Section \ref{sec:characterizeRegimes} and further expanded on in Appendix \ref{sec: (appdx) S_alpha}. In Section \ref{sec:NumSim}, we carry out numerical simulations on moderate size networks using the various proposed tilts to illustrate and compare the viability of the importance sampling schemes. Additionally, we also discuss alternative strategies for choosing the tilt measure, hybrid tilts and conditioned triangle tilts, which are variants of the edge and triangle tilts.

\vspace{0.2in}

\noindent{\bf Acknowledgement} This work was funded in part through the 2011-2012 SAMSI Program on Uncertainty Quantification, in which each of the authors participated.  JN was partially supported by grant NSF-DMS 1007572. SB was partially supported by grant NSF-DMS 1105581.

\section{Large deviations, importance sampling and exponential random graphs}
\label{sec:def}

\subsection{Large deviations for \ER random graphs}

Before the proof of the main result, we start with a more detailed description of the large deviations principle for \ER random graphs and introduce the necessary constructs required in our proof.
Chatterjee and Varadhan \cite{ChaVar11} have proved a general large deviation principle which is based on the theory of dense graph limits developed by \cite{Borgsetal08} (See also Lovasz's recent monograph, \cite{Lovasz2012monograph}).
In this framework, a random graph is represented as a function $X(x,y) \in \widetilde{\calW}$, where $\widetilde{\calW}$ is the set of all measureable functions $f:[0,1]^2 \to [0,1]$ satisfying $f(x,y) = f(y,x)$. 
Specifically, a finite simple graph $X$ on $n$ vertices is represented by the function, or \emph{graphon},
\begin{equation} \label{eq: graphon}
  X(x,y) = \sum_{\substack{i,j=1 \\i \neq j}}^n X_{ij} \indc_{[\frac{i-1}{n},\frac{i}{n}) \times [\frac{j-1}{n},\frac{j}{n})}(x,y) \,\in \widetilde{\calW}.
\end{equation}
Here we treat $(X_{ij})$ as a symmetric matrix with entries in $\{0,1\}$ and $X_{ii} = 0$ for all $i$.
In general, for a function $f\in \widetilde{\calW}$, $f(x,y)$ can be interpreted as the probability of having an edge between vertices $x$ and $y$.
Then, we define the quotient space $\calW$ under the equivalence relation defined by $f \sim g$ if $f(x,y) = g(\sigma x,\sigma y)$ for some measure preserving bijection $\sigma: [0,1]\rightarrow[0,1]$.
Intuitively, an equivalence class contains graphons that are equal after a relabelling of vertices.
(See, e.g., \cite{Borgsetal08, ChaVar11} for further exploration and properties of the quotient space.)

By identifying a finite graph $X$ with its graphon representation, we can consider the probability measure $\ProbP_{n,p}$ as a measure induced on $\calW$ supported on the finite subset of graphons of finite graphs.
For $f\in\calW$, denote 
\begin{equation}
\label{eqn:edge-f-def}
\calE(f) = \int_0^1 \!\! \int_0^1 f(x,y) \,dx\,dt   
\end{equation}
and 
\begin{equation}
\label{eqn:triangle-f-def}
\calT(f) = \int_0^1 \!\! \int_0^1 \!\! \int_0^1 f(x,y) f(y,z) f(x,z) \,dx\,dy\, dz.     
\end{equation}
We see that $E(X) = \frac{n^2}{2} \calE(X)$ and $T(X) = \frac{n^3}{6} \calT(X)$, so that $\calE$ and $\calT$ represent edge and triangle densities of the graph $X$, respectively. 
Then, rather than considering the event $W_{n,t}$, we shall equivalently consider the upper tails of triangle densities, 
\[
\calW_t := \{ f \in \calW\;|\;\; \calT(f) \geq t^3 \}.
\]

The large deviation principle of Chatterjee and Varadhan \cite{ChaVar11} implies for any $p \in (0,1)$ and $t \in [p,1]$,
\be \label{eq: LDP statement}
\lim_{n \to \infty} \frac{1}{n^2} \log \ProbP \left( \calT(\calG_{n,p}) \geq t^3 \right) = - \phi(p,t)
\ee
where $\phi(p,t)$ is the large deviation decay rate given by a variational form,
\be \label{eq: LDP rate runction}
\phi(p,t) = \inf \left\{ \calI_p(f)\;|\; f \in \mathcal{W}, \;\; \mathcal{T}(f) \geq t^3 \right\}
= \inf_{f\in\calW_t} \,[\calI_p(f)].
\ee
Here,
\be
\mathcal{I}_p(f) := \int_0^1 \! \int_0^1 I_p(f(x,y)) \,dx\,dy
\label{eq: LDP rate function}
\ee
is the large deviation rate function, where $I_p:[0,1] \to \Rm$ is defined at (\ref{Iprate}). A further important consequence of the large deviation principle concerns the typical behaviour of the conditioned probability measure
\[
\ProbP_{n,p}(A|\calW_t) = \ProbP_{n,p}(A \cap \calW_t) \mu_n^{-1}.
\]
When we refer to $\calG_{n,p}$ conditioned on the event $\calW_t=\set{\calT(f) \geq t^3}$, we mean the random graph whose law is given by this conditioned probability measure.

\begin{lemma} \label{lem: typical conditioned ER graph}
  (\cite[{Theorem 3.1}]{ChaVar11}, Lemma \ref{lem: minimalF^*})
  Let $\calF^* \subset \calW$ be the non-empty set of graphs that optimize the variational form in \eqref{eq: LDP rate runction}. Then the \ER graph $\calG_{n,p}$ conditioned on $\set{\calT(f) \geq t^3}$ is asymptotically indistinguishable from the minimal set $\calF^*$.
\end{lemma}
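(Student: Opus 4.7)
The plan is to derive the statement from the Chatterjee--Varadhan LDP together with a standard concentration argument on neighborhoods of the minimizer set in the cut metric. First, I would fix the cut metric topology on the quotient space $\calW$ and recall three ingredients: (i) $\calW$ is compact in this topology (Lov\'asz--Szegedy); (ii) the rate function $\calI_p$ is lower semicontinuous and has compact sublevel sets; and (iii) the triangle density functional $\calT$ is continuous. Together these imply that the set $\calF^* = \{f \in \calW_t : \calI_p(f) = \phi(p,t)\}$ is nonempty and compact, and for any open neighborhood $U \supset \calF^*$ the complement $\calW_t \setminus U$ is closed, hence compact.

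The key step is a strict-gap estimate. Set
\[
\phi_U := \inf_{f \in \calW_t \setminus U} \calI_p(f).
\]
By compactness of $\calW_t \setminus U$, lower semicontinuity of $\calI_p$, and the definition of $\calF^*$ as the full set of minimizers, one has $\phi_U > \phi(p,t)$ for any open $U \supset \calF^*$ (otherwise a minimizing sequence would produce a limit point in $\calF^* \cap (\calW_t \setminus U)$, a contradiction). Now apply the LDP of Chatterjee--Varadhan: the upper bound on the closed set $\calW_t \setminus U$ gives
\[
\limsup_{n \to \infty} \frac{1}{n^2}\log \ProbP_{n,p}\bigl(\calG_{n,p} \in \calW_t \setminus U\bigr) \;\leq\; -\phi_U,
\]
while the LDP identity \eqref{eq: LDP statement} gives $\frac{1}{n^2}\log \ProbP_{n,p}(\calG_{n,p} \in \calW_t) \to -\phi(p,t)$. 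Dividing, one obtains
\[
\ProbP_{n,p}\bigl(\calG_{n,p} \notin U \,\big|\, \calW_t\bigr) \;\leq\; e^{-n^2(\phi_U - \phi(p,t) - o(1))} \;\longrightarrow\; 0,
\]
which is precisely the statement that the conditional law concentrates on every open neighborhood of $\calF^*$, i.e.\ asymptotic indistinguishability from $\calF^*$ in the sense recalled at \eqref{eq: asymp indistinguishable}.

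The only genuinely non-routine point is the strict-gap estimate $\phi_U > \phi(p,t)$; once that is in place, the rest is a direct invocation of the LDP upper and lower bounds. I would justify the gap by a compactness/contradiction argument as sketched above, appealing to the fact that $\calI_p$ has compact sublevel sets in the cut metric (which in turn uses that $I_p$ is convex, continuous and nonnegative). Since this lemma is essentially a reformulation of Theorem~3.1 of \cite{ChaVar11}, the cleanest write-up is to cite that theorem for the concentration statement and only verify that ``asymptotically indistinguishable from $\calF^*$'' in our terminology \eqref{eq: asymp indistinguishable} matches the convergence of the conditional law to the set $\calF^*$ in the cut metric.
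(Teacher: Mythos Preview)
Your argument correctly establishes that the conditional law concentrates on every cut-metric neighborhood of $\calF^*$, which is exactly the statement that the conditioned graph is asymptotically indistinguishable from $\calF^*$. This part matches the paper's approach (which simply cites \cite[Theorem~3.1]{ChaVar11}), and your strict-gap argument via compactness and lower semicontinuity is the standard way to unpack that citation.

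However, the lemma asserts more: it says $\calF^*$ is the \emph{minimal} such set, in the sense defined just after \eqref{eq: asymp indistinguishable}. That is, you must also show that no proper closed subset of $\calF^*$ enjoys the same concentration property. Your proposal does not address this at all, and it is not automatic: one needs to rule out the possibility that the conditional mass avoids some portion of $\calF^*$ entirely. The paper handles this in the appendix (Lemma~\ref{lem: minimalF^*}) by a separate argument using the LDP \emph{lower} bound. Concretely, for any relatively open nonempty $\calF_0 \subset \calF^*$ with $\calF^*\setminus\calF_0$ nonempty, one picks $\eps>0$ small enough that the open set $\calF_\eps = \{f: \delta_\square(f,\calF^*\setminus\calF_0)>\eps\}$ still contains a point of $\calF_0$ in the interior of $\calF_\eps\cap\calW_t$; then $\inf_{(\calF_\eps\cap\calW_t)^\circ}\calI_p = \phi(p,t)$, and the LDP lower bound gives
\[
\liminf_{n\to\infty}\frac{1}{n^2}\log\ProbP_{n,p}\bigl(\calG_{n,p}\in\calF_\eps \,\big|\, \calW_t\bigr)\;\geq\;0,
\]
so the conditional law does \emph{not} concentrate on $\calF^*\setminus\calF_0$. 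Without this half of the argument your proof is incomplete.
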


The term ``asymptotically indistinguishable" in Lemma \ref{lem: typical conditioned ER graph} roughly means that the graphon representation of the graph converges in probability, under the \emph{cut distance} metric, to some function $f^*\in \calF^*$ at an exponential rate as $n \to \infty$. 
Intuitively, this means that the typical conditioned \ER graph resembles some graph $f^\ast \in \calF^*$ for large $n$.
In order to give a more precise definition of asymptotic indistinguishability, we first recall the cut distance metric $\delta_{\square}$, defined for $f,g \in \calW$ by
\[
\delta_\square(f,g) = \inf_{\sigma} \sup_{S,T \subset [0,1]} \left| \int_{S \times T} (f(\sigma x,\sigma y) - g(x,y)) \,dx \,dy\right|,
\] 
where the infimum is taken over all measure-preserving bijections $\sigma:[0,1] \to [0,1]$.
For $\calF_1, \calF_2 \subset \calW$,
\[
\delta_\square(\calF_1,\calF_2) = \inf_{f_1 \in \calF_1, f_2\in \calF_2} \delta_\square(f_1,f_2).
\] 
It is known by \cite{LovSze06} that $(\calW,\delta_\square)$ is a compact metric space. 

We say that a family of random graphs $\calG_{n}$ on $n$ vertices, for $n \in \mathbb{N}$, is asymptotically indistinguishable from a subset $\calF \subset \calW$ if: for any $\epsilon_1 > 0$ there is $\epsilon_2> 0$ such that
\begin{equation} \label{eq: asymp indistinguishable}
\limsup_{n \to \infty} \frac{1}{n^2} \log \ProbP( \delta_{\square}(\calG_{n}, \calF) > \epsilon_1) < - \epsilon_2  .
\end{equation}
Further, we say that $\calG_{n}$ is asymptotically indistinguishable from the \emph{minimal} set $\calF \subset \calW$ if $\calF$ is the smallest closed subset of $\calW$ that $\calG_{n}$ is asymptotically indistinguishable from.
Clearly, if $\calG_{n}$ is asymptotically indistinguishable from a singleton set $\calF$, then $\calF$ is, trivially, minimal.
Finally, we say two random graphs $\calG_n^1$, $\calG_n^2$ are asymptotically indistinguishable if they are each asymptotically indistinguishable from the same minimal set $\calF\subset\calW$. 
Intuitively, this means that the random behaviour, or the typical graphs, of $\calG_n^1$ resembles that of $\calG_n^2$ for large $n$.
(See \cite{ChaDia11} and \cite{ChaVar11} for a wide-ranging exploration of this metric in the context of describing limits of dense random graph sequences.)

Using this terminology, we observe that an \ER graph $\calG_{n,u}$ is asymptotically indistinguishable from the singleton set containing the constant function $f^* \equiv u$.
A special notion about whether the conditioned \ER graph is again an \ER graph leads to the following definition.

\begin{definition} \label{def:replica symmetric}
  The \emph{replica symmetric phase} is the regime of parameters $(p,t)$ for which the large deviations rate satisfies
\be
\inf_{f\in\calW_t} \left[ \calI_p(f)\right] = I_p(t), \label{Ipmin}
\ee
and the infimum is uniquely attained at the constant function $t$.

The \emph{replica breaking phase} is the regime of parameters $(p,t)$ that are not in the replica symmetric phase.
\qed
\end{definition}

Hence, the notion of replica symmetry is a property of the rare event problem, where the \ER graph $\calG_{n,p}$ conditioned on the event $\{\calT(f) \geq t^3\}$ is asymptotically indistinguishable from an \ER graph with the higher edge density, $\calG_{n,t}$, a consequence of Lemma \ref{lem: typical conditioned ER graph}.
In contrast, the conditioned graphs in the replica breaking phase are not indistinguishable from any one \ER graph; instead, they may behave like a mixture of \ER graphs or exhibit a clique-like structure with edge density less than $t$.
The term ``replica symmetric phase'' is borrowed from \cite{ChaVar11}, which in turn was inspired by the statistical physics literature.
However, we remark that this term has been used differently from us by other authors to refer to other families of graphs behaving like an \ER graph or a mixture of \ER graphs.

\subsection{Asymptotic behavior of exponential random graphs}
\label{sec: Asymptotic behavior of exponential graphs}

To find ``good'' importance sampling tilted measures, we focus on the class of exponential random graphs.
The exponential random graph is a random graph on $n$ vertices defined by the Gibbs measure
\begin{equation}
  \ProbQ (X) = \ProbQ^{h,\beta,\alpha}_n (X) \propto e^{n^2 \calH(X)} \label{eq: Gibbs measure}
\end{equation}
on $\Omega_n$, where for given $h \in \Rm$, $\beta\in \Rm_{+}, \alpha >0$, the Hamiltonian is
\begin{equation}
\label{eqn:hamiltonian-def}
    \calH(X) = \frac{h}{2} \calE(X) +\frac{\beta}{6} \calT(X)^{\alpha}.
\end{equation}
We will use $\psi_n = \psi_n^{h,\beta,\alpha} $ to denote the log of the  normalizing constant (free energy)
\[
\psi_n = \psi_n^{h,\beta,\alpha} = \frac{1}{n^2} \log \sum_{X \in \Omega_n} e^{n^2 \calH(X)},
\]
so that $\ProbQ^{h,\beta,\alpha}_n(X) = \exp(n^2(\calH(X) - \psi_n))$.  We denote by $\calG_{n}^{h,\beta,\alpha}$ the exponential random graph defined by the Gibbs measure \eqref{eq: Gibbs measure}. 
The case where $\alpha=1$ is the ``classical'' exponential random graph model that has an enormous literature in the social sciences, see e.g. \cite{robins2007recent,robins2007introduction} and the references therin and rigorously studied in a number of recent papers, see e.g. \cite{BhaBreSly08,ChaDia11,RadYin12,LubZhao12,yin2012cluster,yin2012critical}. In this case, the Hamiltonian can be rewritten as $n^2 \calH(X) = h E(X) + \frac{\beta}{n} T(X)$. We will drop the superscripts in $\psi_n^{h,\beta}, \ProbQ^{h,\beta}_n$ when $\alpha=1$.
The generalization to the exponential random graph with the parameter $\alpha$ was first proposed in \cite{LubZhao12}.

Observe that the \ER random graph is a special case of the exponential random graph: if $\beta=0$ and $h=h_p$ with $h_p$ defined by \eqref{eq:h_p}, then $\ProbQ^{h_p,0,\alpha}_n = \ProbP_{n,p}$ for any $\alpha>0$ and the edges are independent with probability $p$.
On the other hand, choosing $\beta > 0$ introduces a non-trivial dependence between the edges.
By adjusting the parameters $(h,\beta,\alpha)$, the Gibbs measure $\ProbQ_{n}^{h,\beta,\alpha}$ can be adjusted to favor edges and triangles to varying degree.

The asymptotic behavior of the exponential random graph measures $\ProbQ^{h,\beta,\alpha}_n$ and the free energy $\psi_n^{h,\beta,\alpha}$ is partially characterized by the following result of Chatterjee and Diaconis \cite{ChaDia11} and Lubetzky and Zhao \cite{LubZhao12}. 
In what follows, we will make use of the functions
\be \label{eq: I(u)}
I(u) = \frac{1}{2} u\log u + \frac{1}{2} (1-u) \log (1-u)
\ee
on $u\in[0,1]$ and, for $f\in\calW$,
\be \label{eq: calI(f)}
\mathcal{I}(f) := \int_0^1\!\int_0^1 I(f(x,y)) \,dx\,dy.
\ee

\begin{theorem}[See \cite{ChaDia11} \cite{LubZhao12}] \label{thm: ChaDai11 Thm 4.1}
For the exponential random graph $\calG_{n}^{h,\beta,\alpha}$ with parameters $(h,\beta,\alpha)\in \bR\times \bR^+\times [2/3,1]$, the free energy satisfies
  \begin{equation} \label{FEvar alpha}
    \lim_{n \to \infty} \psi_n^{h,\beta, \alpha} = \sup_{0\leq u\leq 1} \bigg[\frac{\beta}{6} u^{3\alpha} - I(u) + \frac{h}{2} u  \bigg].
  \end{equation}
If the supremum in \eqref{FEvar alpha} is attained at a unique point $v^* \in [0,1]$, then the exponential random graph $\calG_{n}^{h,\beta,\alpha}$ is asymptotically indistinguishable from the \ER graph $\calG_{n,v^*}$.
\end{theorem}

The case $\alpha = 1$ in Theorem \ref{thm: ChaDai11 Thm 4.1} was proved by Chatterjee and Diaconis -- Theorems 4.1, 4.2 of \cite{ChaDia11}; the cases $\alpha \in [2/3,1]$ is due to Lubetzky and Zhao-- Theorems 1.3, 4.3 of \cite{LubZhao12}.

Our main result in this section, stated next, is the generalization of the variational formulation for the free energy of the Gibbs measure of any exponential random graph.
Our result emphasizes the connection between the exponential random graph and the conditioned \ER graph. Before stating the result we will need some extra notation. Extend the Hamiltonian defined in \eqref{eqn:hamiltonian-def} to the space of graphons in the natural way 
\begin{equation}
\label{eqn:graphon-hamiltonian}
    \cH(f):= \frac{h}{2} \cE(f) + \frac{\beta}{6} \cT(f)^\alpha
\end{equation}
where recall the definitions for the density of edges and triangles for graphons defined respectively in \eqref{eqn:edge-f-def} and \eqref{eqn:triangle-f-def}. For fixed $q\in (0,1)$ recall the functions $\cI_q(f)$ from \eqref{eq: LDP rate function} and the function $\cI(f)$ from \eqref{eq: calI(f)}. In particular, observe that
\be
\mathcal{I}_q(f) = \mathcal{I}(f) - \frac{h_q}{2} \mathcal{E}(f) - \frac{1}{2} \log(1 - q). \label{IpIqident}
\ee
with $h_q = \log \frac{q}{1-q}$.

\begin{theorem} \label{thm: Exponential graph free energy}
For the exponential random graph $\calG_{n}^{h,\beta,\alpha}$ with parameters $(h,\beta,\alpha)\in \bR\times [0,+\infty) \times [0,1]$, the free energy satisfies
  \begin{align} \label{eq: free energy}
    \lim_{n\rightarrow\infty} \psi_{n}^{h,\beta,\alpha}  &= \sup_{0\leq u\leq1} \left[\frac{\beta}{6}u^{3\alpha} - \phi_q(u) - \frac{1}{2}\log(1-q) \right]
  \end{align}
where $q \in (0,1)$ is such that $h=h_q=\log\frac{q}{1-q}$, and
\begin{equation} \label{eq: phi_q(u)}
  \phi_q(u) = \inf_{f \in \partial \calW_{u}} [\calI_q(f)]
\end{equation}
and $\partial \calW_u := \{ f\in\calW \,|\, \calT(f) = u^3\}$.

If the supremum in \eqref{eq: free energy} is attained at a unique point $v^* \geq q$, then the exponential random graph $\calG_{n}^{h_q,\beta,\alpha}$ is asymptotically indistinguishable from the conditioned \ER graph, $\calG_{n,q}$ conditioned on the event $\set{ \calT(f) \geq (v^*)^3 }$.
\end{theorem}

\begin{remark}
\begin{enumerate}[(i)]
  \item If, in addition, $(q,v^*)$ in Theorem \ref{thm: Exponential graph free energy} belongs to the replica symmetric phase, then $\calG_{n}^{h_q,\beta,\alpha}$ is asymptotically indistinguishable from the \ER graph $\calG_{n,v^*}$. This follows from the remarks following Definition \ref{def:replica symmetric}, that in the replica symmetric phase, $\calG_{n,q}$ conditioned on $\set{ \calT(f) \geq (v^*)^3 }$ is asymptotically indistinguishable from the \ER graph $\calG_{n,v^*}$. 
In this case, \eqref{eq: free energy} reduces to \eqref{FEvar alpha}.

  \item Non-uniqueness of $v^*$ is possible. As will be apparent from the proof, if the supremum in \eqref{eq: free energy} is attained on the set $U^* \subset [0,1]$, then the exponential random graph $\calG_n^{h_q,\beta,\alpha}$ is asymptotically indistinguishable from the minimal set $\calF^* = \bigcup_{u\in U^*} \calF^*_u$, where $\calF^*_u$ is the set of minimizers of \eqref{eq: phi_q(u)}. In particular, if $U^*$ contains more than one element, then $\calG_n^{h,\beta,\alpha}$ is asymptotically indistinguishable from a mixture of different conditioned \ER graphs.

\end{enumerate}

\end{remark}

\begin{proof}
Theorem 3.1 in \cite{ChaDia11} implies that
\begin{align} \label{eq: free energy var form}
\lim_{n\rightarrow\infty} \psi_{n}^{h_q,\beta,\alpha} = \sup_{f\in \calW} [\calH(f)-\calI(f)].
\end{align}
To show (\ref{eq: free energy}), suppose $f \in \partial \calW_u$, for $u\in (0,1)$. Recalling (\ref{IpIqident}), we have
\begin{align} \label{eq: H-I bound}
  \calH(f) - \calI(f)
  &= \frac{h_q}{2} \calE(f) + \frac{\beta}{6} u^{3\alpha} - \calI(f) \nonumber\\
  &= \frac{\beta}{6} u^{3\alpha} - \calI_q(f) - \frac{1}{2}\log(1-q) \\
  &\leq \frac{\beta}{6} u^{3\alpha} - \inf_{f\in\partial\calW_u} [\calI_q(f)] - \frac{1}{2}\log(1-q) \nonumber
\end{align}
This implies that
\begin{align*}
  \sup_{f\in \partial \calW_u} [\calH(f) - \calI(f)]
  &\leq \frac{\beta}{6} u^{3\alpha} - \inf_{f\in \partial \calW_u} [\calI_q(f)] - \frac{1}{2}\log(1-q),
\end{align*}
and  
\begin{align*}
  \sup_{f \in \calW} [\calH(f) - \calI(f)] 
  &= \sup_{0 \leq u \leq 1} \, \sup_{f \in \partial \calW_u} [\calH(f) - \calI(f)] \\
  &\leq \sup_{0\leq u\leq1} \left[ \frac{\beta}{6} u^{3\alpha} - \inf_{f\in \partial \calW_u} [\calI_q(f)] - \frac{1}{2}\log(1-q) \right]
\end{align*}
Now we show the reverse inequality. Fix $\epsilon > 0$. For each $u\in(0,1)$, let $f_{u,\epsilon} \in \partial \calW_u$ be such that 
\[
\calI_q(f_{u,\epsilon}) \leq \inf_{f\in \partial \calW_u} [\calI_q(f)] +\epsilon.
\]
Therefore, for each $u \in (0,1)$ we have
\br
\mathcal{H}(f_{u,\epsilon}) - \mathcal{I}(f_{u,\epsilon}) & = &  \frac{\beta}{6} u^{3\alpha} - [\calI_q(f_{u,\epsilon})] - \frac{1}{2}\log(1-q) \no \\
& \geq &  \frac{\beta}{6} u^{3\alpha} - \inf_{f\in \partial \calW_u} [\calI_q(f)] - \frac{1}{2}\log(1-q) - \epsilon .
\er
Hence
\br
  \sup_{f \in \calW} [\calH(f) - \calI(f)] \geq \sup_{0 < u < 1} \left[ \frac{\beta}{6} u^{3\alpha} - \inf_{f\in \partial \calW_u} [\calI_q(f)] - \frac{1}{2}\log(1-q) \right] - \epsilon.
\er
Since $\epsilon > 0$ is arbitrary, \eqref{eq: free energy} follows.

To show the next statement, suppose the supremum in \eqref{eq: free energy} is attained at a unique point $v^* \geq q$. 
Let $\calF^*_{v^*}\subset \partial \calW_{v^*}$ denote the set of functions that attains the infimum in \eqref{eq: phi_q(u)}.
We observe from the preceeding proof that, in fact, $\calF^*_{v^*}$ is the set that attains the infimum in \eqref{eq: free energy var form}, so that by \cite[{Theorem 3.2}]{ChaDia11} and Lemma \ref{lem: minimalF^*}, the graph $\calG_{n}^{h_q,\beta,\alpha}$ is asymptotically indistinguishable from the minimal set $\calF^*_{v^*}$.
On the other hand, since $\calF^*_{v^*}$ is also the set that attains the infimum in the LDP rate in  \eqref{eq: LDP rate runction} (due to \cite[{Theorem 4.2(iii)}]{ChaVar11}), the conditioned \ER graph, $\calG_{n,q}$ conditioned on $\set{ \calT(f) \geq (v^*)^3 }$, is also asymptotically indistinguishable from the set $\calF_{v^*}^*$.
Thus, $\calG_{n}^{h_q,\beta,\alpha}$ is asymptotically indistinguishable from the conditioned \ER graph, $\calG_{n,q}$ conditioned on the event $\set{ \calT(f) \geq (v^*)^3 }$.

\end{proof}

The mean behaviour of the triangle density of an exponential random graph $\calG_n^{h_q,\beta,\alpha}$ can be deduced from the variational formulation in \eqref{eq: free energy}, and in special instances, so can the mean behaviour of the edge density.
This is shown in the next proposition, which follows from \cite[{Theorem 4.2}]{ChaDia11} and the Lipschitz continuity of the mappings $f \mapsto \calT(f)$ and $f \mapsto \calE(f)$ under the cut distance metric $\delta_{\square}$ \cite[{Theorem 3.7}]{Borgsetal08}. The proof is left to the appendix.

\begin{proposition} \label{prop: expo graph mean behaviour}
Let $(h_q,\beta,\alpha)\in \bR\times [0,+\infty) \times [0,1]$. If the supremum in \eqref{eq: free energy} is attained at a unique point $v^* \in [0,1]$, then
\be
\lim_{n\rightarrow \infty} \Mean | \calT(\calG_n^{h_q,\beta,\alpha}) - (v^*)^3| = 0. \label{QTmean2}
\ee
Further, if $(q,v^*)$ belongs to the replica symmetric phase, then
\be
\lim_{n\rightarrow \infty} \Mean | \calE(\calG_n^{h_q,\beta,\alpha}) - v^*| = 0. \label{QEmean2}
\ee
\end{proposition}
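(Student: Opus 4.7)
The plan is to reduce both statements to the asymptotic indistinguishability result already at hand (the corollary following Theorem \ref{thm: Exponential graph free energy}) plus continuity of the functionals $\calE$ and $\calT$ on the graphon space $(\calW,\delta_\square)$.

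First, I would observe that by Theorem \ref{thm: Exponential graph free energy}, the supremum $\sup_{f\in\calW}[\calH(f)-\calI(f)]$ is attained exactly on $\calF^*_{v^*}$, and by \cite[{Theorem 3.2}]{ChaDia11} together with Lemma \ref{lem: minimalF^*}, $\calG_n^{h_q,\beta,\alpha}$ is asymptotically indistinguishable from the minimal set $\calF^*_{v^*}$ in the sense of \eqref{eq: asymp indistinguishable}. Every $f\in\calF^*_{v^*}$ lies in $\partial\calW_{v^*}$, so $\calT(f)=(v^*)^3$ for all $f\in\calF^*_{v^*}$.

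Next, I would invoke the fact from graph limit theory (Lovász--Szegedy, see \cite{Borgsetal08}) that the triangle density functional $\calT:\calW\to[0,1]$ is uniformly continuous with respect to the cut distance $\delta_\square$: for any $\eps>0$ there exists $\delta>0$ such that $\delta_\square(f,g)<\delta$ implies $|\calT(f)-\calT(g)|<\eps$. Combining this uniform continuity with the exponential concentration in \eqref{eq: asymp indistinguishable}, we get: for every $\eps>0$ there is some $\eps_2>0$ with
\[
\limsup_{n\to\infty}\frac{1}{n^2}\log \ProbQ_n^{h_q,\beta,\alpha}\bigl(\, |\calT(\calG_n^{h_q,\beta,\alpha}) - (v^*)^3| > \eps\,\bigr) < -\eps_2,
\]
because any $g$ with $\delta_\square(g,\calF^*_{v^*})<\delta$ satisfies $|\calT(g)-(v^*)^3|<\eps$. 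Since $\calT(\calG_n^{h_q,\beta,\alpha})\in[0,1]$ is bounded, convergence in probability upgrades to $L^1$ convergence, giving \eqref{QTmean2}.

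For the second assertion, I would use the extra hypothesis that $(q,v^*)$ lies in the replica symmetric phase. By Definition \ref{def:replica symmetric} (applied via the remark preceding the corollary, noting $v^*\geq q$ is forced in the replica symmetric case since $\phi_q(u)=0$ for $u\leq q$), the set $\calF^*_{v^*}$ reduces to the singleton $\{f\equiv v^*\}$, for which $\calE(f)=v^*$. Repeating the argument of the previous paragraph with the edge density functional $\calE$ in place of $\calT$ (also uniformly continuous with respect to $\delta_\square$) yields $\calE(\calG_n^{h_q,\beta,\alpha})\to v^*$ in probability, and then in $L^1$ by boundedness, giving \eqref{QEmean2}.

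The only slightly delicate point is the first step: passing from the variational characterization of the free energy to asymptotic indistinguishability from $\calF^*_{v^*}$. This is exactly the content of the concentration-on-maximizers theorem of Chatterjee--Diaconis (their Thm 3.2), which applies here because the extended Hamiltonian $\calH$ in \eqref{eqn:graphon-hamiltonian} is continuous on $(\calW,\delta_\square)$; the minimality of $\calF^*_{v^*}$ is recorded in Lemma \ref{lem: minimalF^*}. Once that is in hand, the rest is a direct continuity/boundedness argument, and notably for the edge-density claim there is no hope without the replica symmetric hypothesis, because outside that phase $\calF^*_{v^*}$ may contain graphons of different edge densities and $\calE(\calG_n^{h_q,\beta,\alpha})$ need not concentrate at any single value.
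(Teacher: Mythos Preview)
Your proposal is correct and follows essentially the same approach as the paper: both arguments combine the concentration of $\calG_n^{h_q,\beta,\alpha}$ near $\calF^*_{v^*}$ (via Chatterjee--Diaconis) with continuity of $\calT$ (respectively $\calE$) in the cut metric, and use boundedness to pass to $L^1$. The paper writes this out as an explicit splitting of the expectation over $\{\delta_\square(X,\calF^*_{v^*})>\epsilon_1\}$ and its complement, citing the Lipschitz bound from \cite[Theorem 3.7]{Borgsetal08}, whereas you phrase it as ``convergence in probability plus boundedness implies $L^1$ convergence''; these are the same argument.
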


\section{Asymptotic Optimality}
\label{sec:asympEff}

Recall that the edge tilt corresponds to the Gibbs measure (\ref{expGibbsdef}) with $\beta = 0$ and $h > h_p = \log \frac{p}{1-p}$. Thus, an edge tilt $\ProbQ_{n}^{h,0}$ satisfies
\br
\frac{d\ProbP_{n,p}}{d\ProbQ_{n}^{h,0}} (X) =  \exp \left[- n^2 \left(\frac{h-h_p}{2} \calE(X) + \psi_n^{h_p,0} - \psi_n^{h,0} \right) \right].
\er
The triangle tilt corresponds to the Gibbs measure (\ref{expGibbsdef}) with $h = h_p$ and $\beta > 0$, $\alpha > 0$. So, the triangle tilt $\ProbQ_{n}^{h_p,\beta,\alpha}$ satisfies
\[
\frac{d\ProbP_{n,p}}{d\ProbQ_{n}^{h_p,\beta,\alpha}} (X) =  \exp \left[- n^2 \left(\frac{\beta}{6} \calT(X)^{\alpha} + \psi_n^{h_p,0} - \psi_n^{h_p,\beta,\alpha}\right) \right]
\]
Here recall that $\cT(X)= \frac{6}{n^3}T(X)$ is the density of triangles in $X$ and $\cE(X)= \frac{2}{n^2} E(X)$ is the density of edges.

For any admissible parameters $(h,\beta,\alpha)$, the importance sampling estimator based on the tilted measure $\ProbQ_{n}^{h,\beta,\alpha}$ is
\begin{align} \label{eq: IS estimator}
\tilde M_K &= \frac{1}{K} \sum_{k=1}^K {\bf1}_{\calW_t}(\tilde{X}_k) \frac{d\ProbP_{n,p}}{d\ProbQ_{n}^{h,\beta,\alpha}} (\tilde{X}_k) \nonumber\\
  &= \frac{1}{K} \sum_{k=1}^K {\bf1}_{\calW_t}(\tilde{X}_k) 
  \exp \left\{n^2 \left( \frac{h_p-h}{2} \calE(\tilde{X}_k) - \frac{\beta}{6} \calT(\tilde{X}_k)^{\alpha} + \psi_n^{h,\beta,\alpha} - \psi_n^{h_p,0} \right) \right\} 
\end{align}
where $\tilde{X}_k$ are i.i.d. samples drawn from $\ProbQ_{n}^{h,\beta,\alpha}$. Denote 
\begin{equation*}
  \hat{q}_n = \hat{q}_{n} (\tilde{X})
  = {\bf1}_{\calW_t}(\tilde{X}) \frac{d\ProbP_{n,p}}{d\ProbQ_{n}^{h,\beta,\alpha}} (\tilde{X}).
\end{equation*}
For any $(h,\beta,\alpha)$, $\Mean [\hat{q}_n] = \mu_{n}$ and so $\tilde M_K$ is an unbiased estimator for $\mu_{n}$.

Our first result is a necessary condition for asymptotic optimality of the importance sampling scheme:

\begin{proposition} \label{prop: ERu*NoOpt}
 Given $p < t$, let $(h,\beta,\alpha)\in \mathbb{R} \times [0,+\infty) \times [0,1]$ with $h = h_q = \log \frac{q}{1-q}$. Suppose that the supremum in \eqref{eq: free energy} is not attained at $t$:
\begin{align}  \label{eq: necessary cond for opt}
\sup_{0\leq u\leq1} \left[\frac{\beta}{6}u^{3\alpha} - \phi_q(u) - \frac{1}{2}\log(1-q) \right] \neq \frac{\beta}{6}t^{3\alpha} - \phi_q(t) - \frac{1}{2}\log(1-q).
\end{align}
  Then the importance sampling scheme based on the Gibbs measure tilt $\ProbQ_{n}^{h,\beta,\alpha}$ is not asymptotically optimal.
\end{proposition}

\begin{corollary}
 Given $p < t$, let $(h,\beta,\alpha)\in \mathbb{R} \times [0,+\infty) \times [0,1]$ with $h = h_q = \log \frac{q}{1-q}$. Suppose that family of random graphs $\calG_{n}^{h,\beta,\alpha}$ is not indistinguishable from $\calG_{n,p}$ conditioned on the event $\{\mathcal{T}(X) \geq t^3\}$.    Then the importance sampling scheme based on the Gibbs measure tilt $\ProbQ_{n}^{h,\beta,\alpha}$ is not asymptotically optimal.
\end{corollary}

Our next result shows that for triangle tilts (i.e. $h = h_p$), the necessary condition described in Proposition \ref{prop: ERu*NoOpt} is also a sufficient condition for asymptotic optimality:

\begin{proposition} \label{prop: asymptotic optimality} Given $p < t$, let $h = h_p$ and $(\beta,\alpha)\in [0,+\infty) \times [0,1]$. Suppose that the supremum in \eqref{eq: free energy} is attained at $t$:
\begin{align} \label{trianglesufficient}
\sup_{0\leq u\leq1} \left[\frac{\beta}{6}u^{3\alpha} - \phi_p(u) - \frac{1}{2}\log(1-p) \right] = \frac{\beta}{6}t^{3\alpha} - \phi_p(t) - \frac{1}{2}\log(1-p).
\end{align}
Then the importance sampling scheme based on the triangle tilt $\ProbQ_{n}^{h_p,\beta,\alpha}$ is asymptotically optimal.
\end{proposition}

In Section \ref{sec:characterizeRegimes}, we give a more explicit way to determine the tilt parameters that satisfy the condition \eqref{trianglesufficient}.

Next, we turn to the edge tilts (i.e. $\beta=0$).
Since $\phi_q(u)$ is minimized at $u=q$ and \eqref{eq: necessary cond for opt} holds if $\beta=0$ and $q\neq t$, we have, as a corollary of Prop \ref{prop: ERu*NoOpt}, the following necessary condition for an edge tilt to produce an optimal scheme.

\begin{proposition} \label{prop:edgeNo}
Given $p < t$, let $\beta = 0$ and $h = h_q$ for some $q \neq t$. The importance sampling scheme based on the edge tilt $\ProbQ_{n}^{h_q,0,\alpha}$ is not asymptotically optimal.
\end{proposition}

Observe that for the edge tilt with $h = h_t$ and $\beta = 0$, the supremum in \eqref{eq: free energy} is always attained at $t$, since
\br
\inf_{0\leq u\leq1} \left[ \phi_t(u) \right] = \inf_{f \in \calW} \left[ \mathcal{I}_t(f) \right] =  \phi_t(t) = 0.
\er
Thus, the edge tilt with $h = h_t$ always satisfies the necessary condition for asymptotic optimality of the importance sampling scheme. Furthermore, if $(p,t)$ is in the replica symmetric phase, the tilted measure $\ProbQ_{n}^{h_t,0,\alpha}$ is indistinguishable from the conditioned \ER graph. Nevertheless, the sampling scheme based on the edge tilt with $h = h_t$ may still be suboptimal, even in the replica symmetric phase, as the next result shows.

\begin{proposition} \label{prop:edgeNoOpt} 
Let $0 < p < \frac{e^{-1/2}}{1+e^{-1/2}}$ and $t \in (p,1)$. If $t$ is sufficiently close to $1$ and $(p,t)$ belong to the replica symmetric phase, then the importance sampling scheme based on the edge tilt $\ProbQ^{h_t,0}_n$ is not asymptotically optimal.
\end{proposition}

Remark \ref{remark:param_edgeNoOpt} and Figure \ref{fig:replicaphase} indicate that there do exist parameters $(p,t)$ belonging to the replica symmetric phase for which the hypothesis of Prop \ref{prop:edgeNoOpt} is satisfied.

\subsection{Proofs of results.}

We first prove the asymptotic optimality of the triangle tilts, Prop \ref{prop: asymptotic optimality}.

\noindent

\begin{proof}[Proof of Proposition \ref{prop: asymptotic optimality}]
Due to \eqref{eq: Jensen's asymptotic lower bound}, it suffices to show that
\be
\lim_{n\rightarrow \infty} \frac{1}{n^2} \log \Mean_{\ProbQ} [\hat{q}_n^2] \leq -2 \inf_{f \in \calW_t} \mathcal{I}_p(f). \label{opttiltbound}
\ee
Let $q \in (0,1)$ be such that $h = h_q = \log \frac{q}{1 - q}$. Recall that
\[
\ProbQ_{n}^{h,\beta,\alpha}(X) = \exp\left[ n^2 \left(\frac{h}{2}\mathcal{E}(X) + \frac{\beta}{6} \mathcal{T}^\alpha(X) - \psi_n^{h,\beta,\alpha} \right)\right].
\]
Therefore, by definition of $\hat q_n$, we have
\begin{eqnarray}
\Mean_{\ProbQ_n} [\hat{q}_n^2] & = & \Mean_{\ProbP_{n,p}} \left[ {\bf1}_{\calW_t} \, \frac{d\ProbP_{n,p}}{d\ProbQ_{n}^{h_q,\beta,\alpha}} \right]  \no \\
& = & \Mean_{\ProbP_{n,p}} 
\left[ \exp \left\{ n^2 \left( {\bf0}_{\calW_t}(X) + \frac{h_p-h_q}{2}\calE(X) - \frac{\beta}{6}\calT(X)^\alpha  + \psi_n^{h_q,\beta,\alpha} - \psi_n^{h_p,0} \right) \right\} \right], \no
\end{eqnarray}
where ${\bf1}_{\calW_t}(X) = e^{n^2 {\bf0}_{\calW_t}(X)} $ with ${\bf0}_{\calW_t}(X) =0$ if $X\in\calW_t$ and ${\bf0}_{\calW_t}(X) =-\infty$ otherwise. The mappings $\calE, \calT : \calW \mapsto \Rm $ are bounded and continuous  \cite[Theorem 3.8]{Borgsetal08}, and the function ${\bf0}_{\calW_t}(X)$ can be approximated by bounded continuous approximations. Applying the Laplace principle for the family of measures $\ProbP_{n,p}$, for which $\calI_p(f)$ is the rate function \cite[Theorem 3.1]{ChaDia11}, we obtain

\br
\lim_{n\rightarrow \infty} \frac{1}{n^2} \log \Mean_{\ProbQ_n} [\hat{q}_n^2] & = &
 \lim_{n\rightarrow \infty} \frac{1}{n^2} \log \Mean_{\ProbP_{n,p}} \left[ \exp \left\{ n^2 \left( {\bf0}_{\calW_t}(X) + \frac{h_p-h_q}{2}\calE(X) - \frac{\beta}{6}\calT(X)^\alpha  \right) \right\} \right] \no \\
& & + \lim_{n\rightarrow \infty} \left( \psi_n^{h_q,\beta,\alpha} - \psi_n^{h_p,0} \right) \no \\
& = & - \inf_{f\in\calW_t} \left[ \calI_p(f) + \frac{h_q-h_p}{2} \calE(f) + \frac{\beta}{6} \calT(f)^{\alpha} \right] 
+ \lim_{n\rightarrow \infty} \left( \psi_n^{h_q,\beta,\alpha} - \psi_n^{h_p,0}\right) \label{qnlim1} 
\er
By \eqref{eq: free energy},
\[
\lim_{n\rightarrow \infty} \psi_n^{h_q,\beta,\alpha} 
= V(u^*)
\]
where $u^\ast = \argsup{0\leq u\leq1} [V(u)]$ and
\[
V(u) := \frac{\beta}{6} u^{3\alpha} - \inf_{f\in\partial \calW_u} [\calI_q(f)] - \frac{1}{2} \log(1-q) .
\]
Also, $\lim_{n\rightarrow \infty} \psi_n^{h_p,0} = -\frac{1}{2} \log(1-p)$. Hence,
\br
&&\lim_{n\rightarrow \infty} \frac{1}{n^2} \log \Mean_{\ProbQ} [\hat{q}_n^2] \label{qnlim2} \\
&& \quad   
= - \inf_{f\in\calW_t} \left[ \mathcal{I}_p(f) + 
   \frac{h_q - h_p}{2} \calE(f) + \frac{\beta}{6} \calT(f)^\alpha \right] + \frac{\beta}{6} (u^*)^{3\alpha} - \inf_{f\in\partial \calW_{u^*}} [\mathcal{I}_q(f)] - \frac{1}{2} \log\frac{1-q}{1-p} \no\\
&& \quad  
\leq - \inf_{f\in\calW_t} \left[ \mathcal{I}_p(f) + \frac{h_q - h_p}{2} \calE(f) \right] + \frac{\beta}{6} \left((u^*)^{3\alpha}-t^{3\alpha}\right) -\inf_{f\in\partial \calW_{u^*}} [\mathcal{I}_q(f)] - \frac{1}{2} \log\frac{1-q}{1-p}  \no
\er
The last inequality follows from the fact that $\calT(f) \geq t^3$ for all $f \in \calW_t$. Since,
\[
\mathcal{I}_q(f) + \frac{1}{2} \log \frac{1 - q}{1 -p} = \mathcal{I}_p(f)  + \frac{h_p - h_q}{2}\mathcal{E}(f), 
\]
we conclude that
\br
\lim_{n\rightarrow \infty} \frac{1}{n^2} \log \Mean_{\ProbQ} [\hat{q}_n^2] & \leq &- \inf_{f\in\calW_t} \left[ \mathcal{I}_p(f) + \frac{h_q - h_p}{2} \calE(f) \right]  -\inf_{f\in\partial \calW_{t}} \left[\mathcal{I}_p(f) -  \frac{h_q - h_p}{2} \calE(f)\right]  \no \\
& & + \frac{\beta}{6} \left((u^*)^{3\alpha}-t^{3\alpha}\right) \label{qnlim3}
\er
The estimate \eqref{qnlim3} holds for any $(h_q,\beta,\alpha) \in \Rm \times[0,+\infty) \times [0,1]$. However, under the hypotheses of Proposition \ref{prop: asymptotic optimality}, we have $u^* = t$ and $q = p$. Therefore,
\br
\lim_{n\rightarrow \infty} \frac{1}{n^2} \log \Mean_{\ProbQ} [\hat{q}_n^2] \leq - \inf_{f\in\calW_t} \left[ \mathcal{I}_p(f)  \right]  -\inf_{f\in\partial \calW_{t}} \left[\mathcal{I}_p(f) \right]   = -2 \inf_{f\in\calW_t} \left[ \calI_p(f) \right]\no
\er
Combined with the upper bound for the asymptotic second moment, we conclude that the triangle tilt $\ProbQ_{n}^{h_p,\beta,\alpha}$ yields an asymptotically optimal importance sampling estimator if (\ref{trianglesufficient}) holds.
\end{proof}

We now prove the necessary condition for optimality, Prop \ref{prop: ERu*NoOpt}.

\begin{proof}[Proof of Proposition \ref{prop: ERu*NoOpt}]
We recall from \eqref{eq: free energy var form} that $\lim_{n\rightarrow\infty} \psi^{h_q,\beta,\alpha}_n = \sup_{f\in\calW} [\calH(f)-\calI(f)]$.
Due to Theorem \ref{thm: Exponential graph free energy}, there exists $f^*\in \calW$ such that $f^*$ minimizes the LDP rate function $\inf_{f\in\calW_t} [\calI_p(f)]$, and $f^*$ does not maximize $\sup_{f\in\calW} [\calH(f) - \calI(f)]$.
From \eqref{qnlim1},
\begin{align*}
&\lim_{n\rightarrow \infty} \frac{1}{n^2} \log \Mean_{\ProbQ_n} [\hat{q}_n^2] \\
&\qquad= - \inf_{f\in\calW_t} \left[ \calI_p(f) + \frac{h-h_p}{2} \calE(f) + \frac{\beta}{6} \calT(f)^{\alpha} \right] 
+ \lim_{n\rightarrow \infty} \psi_n^{h,\beta,\alpha} + \frac{1}{2} \log(1-p) \\
&\qquad= - \inf_{f\in\calW_t} \left[ \calI_p(f) + \frac{h-h_p}{2} \calE(f) + \frac{\beta}{6} \calT(f)^{\alpha} \right] 
+ \sup_{f\in\calW} [\calH(f) - \calI(f)] + \frac{1}{2} \log(1-p) \\
&\qquad > -\left[ \calI_p(f^*) + \frac{h-h_p}{2} \calE(f^*) + \frac{\beta}{6} \calT(f^*)^{\alpha} \right] + \frac{h}{2} \calE(f^*) + \frac{\beta}{6} \calT(f^*)^{\alpha} - \calI(f^*) + \frac{1}{2} \log(1-p)  \\
&\qquad= -2 \calT(f^*) = -2\inf_{f\in\calW_t} [\calI_p(f)]
\end{align*}
Hence the importance sampling estimator is not asymptotically optimal.
\end{proof}

\begin{proof}[Proof of Proposition \ref{prop:edgeNoOpt}] 
For the edge tilt with $h=h_t,\beta=0$, we have from \eqref{qnlim2},
\begin{equation}  \label{qn2lim1}
  \lim_{n\rightarrow \infty} \frac{1}{n^2} \log \Mean_{\ProbQ} [\hat{q}_n^2]
  = - \inf_{f\in\calW_t} \left[ \mathcal{I}_p(f)  +
   \left(\frac{h_t - h_p}{2}\right) \calE(f) \right]
   -  I_p(t)  + \left(\frac{h_t - h_p}{2} \right)t .
\end{equation}
Because $(p,t)$ is in the replica symmetric phase, the term $\mathcal{I}_p(f)$ is minimized over $\calW_t$ by the constant function,
\begin{equation*}
  f_t(x,y) \equiv t = \arginf{f\in\calW_t} [\mathcal{I}_p(f)].
\end{equation*}
On the other hand, the term $\calE(f)$ is minimized over $\calW_t$ by the clique function 
\begin{equation} \label{eq: clique function}
  g_t(x,y) = {\bf1}_{[0,t]^2}(x,y) = \arginf{f\in\calW_t} [\calE(f)].
\end{equation}
This $g_t$ represents a graph with a large clique, in which there is a complete subgraph on a fraction $t$ of the vertices. 
Let $\mathcal{V}(f) = \calI_p(f) + \frac{h_t-h_p}{2} \calE(f)$ be the function to be infimized in \eqref{qn2lim1}. We have 
\br
\mathcal{V}(f_t) & = & I_p(t) + \left(\frac{h_t - h_p}{2}\right)t, \no
\er
and
\br
\mathcal{V}(g_t) & = & t^2 I_p(1) + (1 - t^2) I_p(0) + \left(\frac{h_t - h_p}{2}\right)t^2. \no
\er
Thus, if we can show that 
\begin{align*}
  \mathcal{V}(g_t) < \mathcal{V}(f_t),
\end{align*}
it will follow that from \eqref{qn2lim1} that
\begin{align}
\lim_{n\rightarrow \infty} \frac{1}{n^2} \log \Mean_{\ProbQ} [\hat{q}_n^2] 
  &\geq - \mathcal{V}(g_t) - I_p(t) + \left(\frac{h_t-h_p}{2}\right) t  \nonumber \\
  &> - 2I_p(t) = -2 \inf_{f \in \calW_t} \mathcal{I}_p(f). \label{qn2lim3}
\end{align}

We claim that for $p < \frac{e^{-1/2}}{1+e^{-1/2}}$ and $t$ sufficiently close to $1$, we have $\mathcal{V}(g_t) < \mathcal{V}(f_t)$. Indeed, let
\begin{align} \label{eq: G(t)}
G(t) &:= \mathcal{V}(g_t) - \mathcal{V}(f_t) \\
& = t^2 I_p(1) + (1 - t^2)I_p(0) - I_p(t) +  \left(\frac{h_t - h_p}{2} \right) (t^2 - t). \nonumber
\end{align}
Observe that $G(1) = 0$ and 
\[
G'(1) = 2 I_p(1) - 2I_p(0) - 1/2 = - \log\left(\frac{p}{1 -p}\right) -1/2.
\]
So, $G'(1)>0$ if $h_p < -1/2$, i.e., if $p < \frac{e^{-1/2}}{1+e^{-1/2}}$. So, for $t$ sufficiently close to $1$, we have $\mathcal{V}(g_t) < \mathcal{V}(f_t)$, and we conclude that \eqref{qn2lim3} holds with strict inequality. Hence, the importance sampling scheme associated with the edge tilt $\ProbQ_n^{h_t,0}$ cannot be asymptotically optimal. 

\end{proof}

\commentout{
\begin{proof}[Proof of Proposition \ref{prop:edgeNoOpt}] Starting from (\ref{qnlim2}), we have
\br
&&\lim_{n\rightarrow \infty} \frac{1}{n^2} \log \Mean_{\ProbQ} [\hat{q}_n^2] \label{qn2lim1} \\
&& \quad  \quad =   - \inf_{f\in\calW_t} \left[ \mathcal{I}_p(f)  +  \frac{\beta}{6} \calT(f) +
   \left(\frac{h(\beta) - h_p}{2}\right) \calE(f) \right]
   -  I_p(t) + \frac{\beta}{6}t^3 + \left(\frac{h(\beta) - h_p}{2} \right)t \no
\er
where $h(\beta) = h(\beta,1)$ as in \eqref{hbetaRelate} with $\alpha=1$.
Because $(p,t)$ is in the replica symmetric phase, $\mathcal{I}_p(f)$ is minimized by the constant function
\begin{equation*}
  f_t(x,y) \equiv t = \arginf{f\in\calW_t} [\mathcal{I}_p(f)].
\end{equation*}
On the other hand, $\calE$ is minimized by
\begin{equation} \label{eq: clique function}
  f_1(x,y) = {\bf1}_{[0,t]^2}(x,y) = \arginf{f\in\calW_t} [\calE(f)].
\end{equation}
This $f_1$ represents a graph with a large clique, in which there is a complete subgraph on a fraction $t$ of the vertices. Let us define
\br
\Gamma(t) & = & \mathcal{I}_p(f_t) + \frac{\beta}{6} \calT(f_t)+  \left(\frac{h(\beta) - h_p}{2}\right) \calE(f_t) \no \\
& = &  I_p(t) + \frac{\beta}{6} t^3 + \left(\frac{h(\beta) - h_p}{2}\right)t, \no
\er
and
\br
\Gamma(1) & = & \mathcal{I}_p(f_1) + \frac{\beta}{6} \calT(f_1)+  \left(\frac{h(\beta) - h_p}{2}\right) \calE(f_1) \no \\
& = &  t^2 I_p(1) + (1 - t^2)I_p(0) + \frac{\beta}{6} t^3 + \left(\frac{h(\beta) - h_p}{2}\right)t^2. \no
\er
(Recall $h(\beta) = h_t$ here.) From (\ref{qn2lim1}) we see that
\be
\lim_{n\rightarrow \infty} \frac{1}{n^2} \log \Mean_{\ProbQ} [\hat{q}_n^2] \geq - \Gamma(1) -  I_p(t) + \frac{\beta}{6} t^3 + \left(\frac{h(\beta) - h_p}{2} \right)t  \no
\ee
We claim that for $p < e^{-1/2}/(1 + e^{-1/2})$ and $t$ sufficiently close to $1$, we have $\Gamma(1) < \Gamma(t)$. Indeed, let $g(t) = \Gamma(1) - \Gamma(t)$:
\br
g(t) = \Gamma(1) - \Gamma(t) & = & t^2 I_p(1) + (1 - t^2)I_p(0) - I_p(t) +  \left(\frac{h_t - h_p}{2} \right) (t^2 - t) \no \\
& = & t^2 I_p(1) + (1 - t^2)I_p(0) - \frac{1}{2}\left(t \log \frac{t}{p} - (1 - t) \log(1 - p)\right) \no \\
& & - \frac{1}{2} (1-t)^2 \log( 1 - t) + \frac{1}{2}\left( \log t -  \log \frac{p}{1 - p} \right) (t^2 - t). \no
\er
Observe that $g(1) = 0$ and 
\[
g'(1) = 2 I_p(1) - 2I_p(0) - 1/2 = - \log\left(\frac{p}{1 -p}\right)  - 1/2.
\]
So, if $p < e^{-1/2}/(1 + e^{-1/2})$, we have $g'(1) > 0$. So, for $t$ sufficiently close to $1$, we have $\Gamma(1) < \Gamma(t)$. Therefore,
\[
\Gamma(1) < \Gamma(t) =  I_p(t) +\frac{\beta}{6}t^3 +  \left(\frac{h(\beta) - h_p}{2}\right) t,
\]
and we conclude that
\br
\lim_{n\rightarrow \infty} \frac{1}{n^2} \log \Mean_{\ProbQ} [\hat{q}_n^2] & \geq & - \Gamma(1) -  I_p(t) +\frac{\beta}{6}t^3 + \left(\frac{h(\beta) - h_p}{2} \right)t \no \\
& > & -2I_p(t) = -2 \inf_{f \in \calW_t} \mathcal{I}_p(f). \no 
\er
Since the strict inequality holds, the importance sampling scheme associated with $\ProbQ_n^{h,\beta}$ cannot be asymptotically optimal. 

\end{proof}
}


\section{Characterizing regimes for the triangle tilt}
\label{sec:characterizeRegimes}

Proposition \eqref{prop: asymptotic optimality} describes the necessary and sufficient condition \eqref{trianglesufficient} on the parameters $(\beta,\alpha)$ of a triangle tilt, that will produce an optimal importance sampling scheme. Given $(p,t)$, do these optimal tilt parameters $(\beta,\alpha)$ exist and how can they be found? In this section, we describe in a pseudo-explicit procedure for determining the optimal tilt parameters given $(p,t)$.


An explicit determination of the optimal tilt parameters can be made when $(p,t)$ belongs to the replica symmetry phase.

\begin{proposition} \label{prop: RSP beta alpha characterize}
  If $(p,t)$ belongs to the replica symmetry phase, then there exists some $\alpha \in [2/3,1]$ for which the triangle tilt with parameters $(h_p,\beta,\alpha)$ produces an optimal scheme, where $\beta$ satisfies
\begin{align} \label{eq: beta RSP}
  \beta = \frac{h_t-h_p}{\alpha t^{3\alpha-1}}.
\end{align}
\end{proposition}

It will turn out that if there exists some $\alpha\in[0,1]$ and some $\beta$ for which the triangle tilt produces an optimal scheme, then for any $\alpha'\in [0,\alpha]$, and an appropriate $\beta'$ depending on $\alpha'$, the triangle tilt with parameters $(h_p,\beta',\alpha')$ also produces an optimal scheme (see Lemma \ref{lem: (appdx) S_alpha ordering}).
Thus, in Prop \ref{prop: RSP beta alpha characterize}, we can always take $\alpha=2/3$.


It is also of interest to determine the tilt parameters when $(p,t)$ belong to the replica breaking phase.
Our next result, Prop \ref{prop: general beta alpha characterize}, states a more general characterization of the optimal tilt parameters that applies to both the replica symmetry and breaking phases. To state the result, we introduce the minorant condition.


We shall say that $(p,t)$ satisfies the \emph{minorant condition with parameter $\alpha$} if the point $(t^{3\alpha}, \phi_p(t))$ lies on the convex minorant of the function $x\mapsto \phi_p(x^{1/3\alpha})$.
In this case, subdifferential(s) of the convex minorant of $x \mapsto \phi_p(x^{1/3\alpha})$ at $x=t^{3\alpha}$ always exist and are positive.
Recall that the subdifferentials of a convex function $f(x)$ at a point $x$ are the slopes of any line lying below $f(x)$ that is tangent to $f$ at $x$.
The set of subdifferentials of a convex function is non-empty; if the function is differentiable at $x$, then the set of subdifferentials contains exactly one point, the derivative $f'(x)$.


The minorant condition is not an unattainable one, as shown in the next lemma.

\begin{lemma} \label{lem: minorant cond attain}
  The parameters $(p,t)$ that satisfy the minorant condition with some $\alpha$ includes the replica symmetry phase as well as a non-empty subset of the replica breaking phase.
\end{lemma}


\begin{proposition} \label{prop: general beta alpha characterize}
  Suppose $(p,t)$ satisfies the minorant condition for some $\alpha\in[0,1]$. 
Then the triangle tilt with the parameters $(h_p,\beta,\alpha)$ produces an optimal scheme, where $\beta$ is such that $\frac{\beta}{6}$ is a subdifferential of the convex minorant of $x \mapsto \phi_p(x^{1/3\alpha})$ at $x=t^{3\alpha}$.

 Moreover, if $\phi_p(u)$ is differentiable at $t$, then
 \begin{equation} \label{beta-alpha characterize}
  \beta = \frac{2 \phi_p'(t)}{\alpha t^{3\alpha-1}}.
\end{equation}
\end{proposition}

Combining Lemma \ref{lem: minorant cond attain} and Prop \ref{prop: general beta alpha characterize}, there exists $(p,t)$ belonging to the replica breaking phase for which a triangle tilt that produces an optimal scheme exists. 
In particular, in the replica symmetry phase, since $\phi_p(t)=I_p(t)$ is differentiable at $t$, Prop \ref{prop: general beta alpha characterize} reduces to Prop \ref{prop: RSP beta alpha characterize}.
Thus, Prop \ref{prop: RSP beta alpha characterize} gives an explicit construction of the tilt parameters when $(p,t)$ belong to the replica symmetry phase.
In the replica breaking phase, we may need to resort to numerical strategies to find the tilt parameters.
Nonetheless, we emphasize that it is possible in principle to construct an optimal importance sampling estimator in the replica breaking phase even if the limiting behaviour of the conditioned graph is not known exactly.

\subsection*{Proofs of results.}

Notice that if $t$ attains the supremum in \eqref{trianglesufficient}, we may rewrite the condition as
\begin{align*} \label{eq: t=arg sup var form 2}
  t = \argsup{0\leq u\leq1} \left[\frac{\beta}{6}u^{3\alpha} - \phi_p(u) \right].
\end{align*}
Together with Prop \ref{prop: asymptotic optimality}, the next lemma immediately implies Prop \ref{prop: general beta alpha characterize}.


\begin{lemma} \label{lem: minorant condition}
  Suppose $(p,t)$ satisfies the minorant condition for some $\alpha>0$.
Let $\beta$ be such that $\frac{\beta}{6}$ is a subdifferential of the convex minorant of $x \mapsto \phi_p(x^{1/3\alpha})$ at $x=t^{3\alpha}$.
Then $\sup_{0\leq u\leq1} [ \frac{\beta}{6} u^{3\alpha} - \phi_p(u) ]$ is maximized at $t$. 

  Moreover, if $\phi_p(u)$ is differentiable at $t$, then $\beta$ is defined in \eqref{beta-alpha characterize}.
\end{lemma}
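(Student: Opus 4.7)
The plan is to reduce the assertion to a standard convex-analysis statement by the change of variable $x = u^{3\alpha}$. Define
\[
g(x) := \phi_p(x^{1/3\alpha}), \qquad x \in [0,1],
\]
so that the function to be maximized can be rewritten as
\[
\frac{\beta}{6} u^{3\alpha} - \phi_p(u) \;=\; \frac{\beta}{6} x - g(x).
\]
Since $u \mapsto u^{3\alpha}$ is a bijection of $[0,1]$ onto itself, it suffices to show that the function $x \mapsto \frac{\beta}{6} x - g(x)$ on $[0,1]$ attains its supremum at $x_0 := t^{3\alpha}$.

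Let $g^{**}$ denote the convex minorant of $g$ on $[0,1]$. By the minorant hypothesis the point $(x_0, g(x_0)) = (t^{3\alpha}, \phi_p(t))$ lies on the graph of $g^{**}$, so $g^{**}(x_0) = g(x_0)$. By hypothesis $\beta/6$ is a subdifferential of $g^{**}$ at $x_0$, so the supporting-hyperplane inequality gives
\[
g^{**}(x) \;\geq\; g^{**}(x_0) + \tfrac{\beta}{6}(x - x_0) \qquad \text{for all } x \in [0,1].
\]
Combining this with $g(x) \geq g^{**}(x)$ and $g^{**}(x_0) = g(x_0)$, and rearranging, yields
\[
\tfrac{\beta}{6} x - g(x) \;\leq\; \tfrac{\beta}{6} x_0 - g(x_0) \qquad \text{for all } x \in [0,1],
\]
which translates back to $\frac{\beta}{6} u^{3\alpha} - \phi_p(u) \leq \frac{\beta}{6} t^{3\alpha} - \phi_p(t)$ for all $u \in [0,1]$. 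This proves the supremum is attained at $u = t$.

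For the second assertion, suppose $\phi_p$ is differentiable at $t$; then by the chain rule $g$ is differentiable at $x_0$ with
\[
g'(x_0) \;=\; \phi_p'(t) \cdot \tfrac{1}{3\alpha}\, x_0^{1/3\alpha - 1} \;=\; \tfrac{\phi_p'(t)}{3\alpha}\, t^{1-3\alpha}.
\]
The key observation is that at a contact point $x_0$ where $g$ is differentiable, the subdifferential of $g^{**}$ is forced to be the single value $g'(x_0)$: any subdifferential $s$ of $g^{**}$ at $x_0$ must satisfy $g(x) \geq g(x_0) + s(x - x_0)$ (using $g \geq g^{**}$ and $g^{**}(x_0) = g(x_0)$), and letting $x \to x_0^\pm$ pinches $s$ between the two one-sided derivatives of $g$, both of which equal $g'(x_0)$. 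Hence $\beta/6 = g'(x_0)$, which rearranges to
\[
\beta \;=\; \frac{2\phi_p'(t)}{\alpha\, t^{3\alpha-1}} \;=\; \beta^{*},
\]
as defined in \eqref{betastar phi'}.

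The argument is essentially a textbook convex-analysis duality, and I do not expect a serious technical obstacle; the only point that requires a small amount of care is verifying the uniqueness of the subdifferential at a differentiability point of $g$ on its minorant. Everything else follows mechanically from the change of variable $x = u^{3\alpha}$ and the definition of the convex minorant.
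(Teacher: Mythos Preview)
Your proof is correct and follows essentially the same route as the paper: the change of variable $x=u^{3\alpha}$, the supporting-line inequality for the convex minorant, and the identification $\beta/6=g'(x_0)$ in the differentiable case. Your treatment of the second part is in fact more careful than the paper's, which simply asserts that ``the subdifferential is simply the derivative'' without justifying why the subdifferential of the minorant at a contact point must agree with $g'(x_0)$.
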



\begin{proof}
  The proof follows a similar technique to \cite{LubZhao12}.
  Using the rescaling $u \mapsto x^{1/3\alpha}$, the variational form $\sup_{0\leq u\leq1} [ \frac{\beta}{6} u^{3\alpha} - \phi_p(u) ]$ can be rewritten as
\[
\sup_{0\leq x\leq1} [ \frac{\beta}{6} x - \phi_p(x^{1/3\alpha}) ].
\]
Let $\hat{\phi}_p(x)$ denote the convex minorant of $x \mapsto \phi_p(x^{1/3\alpha})$.
The assumption that $\frac{\beta}{6}$ is a subdifferential of $\hat{\phi}_p(x)$ at $x=t^{3\alpha}$ implies that the maximum of $\sup_{x} [ \frac{\beta}{6} x - \hat{\phi}_p(x) ]$ is attained at $t^{3\alpha}$.
By the hypothesis of the lemma, we have assumed that $(p,t)$ satisfies the minorant condition for $\alpha$, so that the point $(t^{3\alpha},\phi_p(t))$ lies on $\hat{\phi}_p(x)$. Thus we have that $\hat{\phi}_p(t^{3\alpha}) = \phi_p(t)$ and so the maximum of $\sup_{x} [ \frac{\beta}{6} x - \phi_p(x^{1/3\alpha}) ]$ is also attained at $t^{3\alpha}$.
It follows that the maximum of $\sup_{u} [ \frac{\beta}{6} u^{3\alpha} - \phi_p(u) ]$ is attained at $t$.
(However, this maximum may not be unique. If the subtangent line defined by the subdifferential $\frac{\beta}{6}$ touches $\hat{\phi}_p$ at another point $r^{3\alpha}$, then $r$ also a maximum.)

To prove the last part of the lemma, if $\phi_p(u)$ is differentiable at $t$, then the subdifferential is simply the derivative. Then we have 
\[
0 = \frac{\partial}{\partial x}\Big|_{x=t^{3\alpha}} 
[ \frac{\beta}{6} x - \phi_p(x^{1/3\alpha}) ] 
= \frac{\beta}{6} - \phi_p'(t) \frac{t^{1-3\alpha}}{3\alpha}
\]
implies that $\beta = \frac{2 \phi_p'(t)}{\alpha t^{3\alpha-1}}$.
\end{proof}


\begin{proof}
(Proof of Lemma \ref{lem: minorant cond attain}.)
Recalling Definition \ref{def:replica symmetric} of the replica symmetric phase, the it follows from the arguments in \cite{LubZhao12} and Theorem 4.3 in \cite{ChaVar11} that any $(p,t)$ that belongs to the replica symmetric phase satisfies the minorant condition for some $\alpha\in[2/3,1]$.

We now show that there exists $(p,t)$ belonging to the replica breaking phase that satisfies the minorant condition for some $\alpha$.
Notice from Appendix \ref{sec: (appdx) S_alpha} and Figure \ref{fig:replicaphase} that there exists some a critical value $p_{crit}$ such that when $p\geq p_{crit}$, $(p,t)$ is replica symmetric for all $t \in [p,1]$; whereas when $p\leq p_{crit}$, there exists an interval $[\underline{r}_p,\overline{r}_p] \subset (p,1)$ where $(p,t)$ is replica breaking if $t \in [\underline{r}_p,\overline{r}_p]$, and $(p,t)$ is replica symmetric for all other values of $t$.

To see this, consider $\alpha=1/3$ and convex minorant of $x \mapsto \phi_p(x^{1/3\alpha}) = \phi_p(x)$.
For each $p<p_{crit}$, there exists an interval $[\underline{r}_p,\overline{r}_p] \subset (p,1)$ where $(p,t)$ is replica breaking if $t \in [\underline{r}_p,\overline{r}_p]$, and $(p,t)$ is replica symmetric for the other values of $t$.
Since $\phi_p(t)< I_p(t)$ if $t \in [\underline{r}_p,\overline{r}_p]$ and $\phi_p(t)= I_p(t)$ for other values of $t$, and since $I_p(u)$ is convex, the convex minorant of $\phi_p(x)$ must touch $\phi_p$ at at least one $t_p \in [\underline{r}_p,\overline{r}_p]$.
So $(p,t_p)$ is replica breaking and satisfies the minorant condition.
\end{proof}


\section{Numerical simulations using importance sampling} \label{sec:NumSim}

We implement the importance sampling schemes to show the optimality properties of the Gibbs measure tilts in practice.
Although we have thus far been considering importance sampling schemes that draw i.i.d. samples from the tilted measure $\ProbQ$, in practice it is very difficult to sample independent copies of exponential random graphs.
This is because of the dependencies of the edges in the exponential random graph, unlike the situation with an \ER graph where the edges are independent.
Thus, to implement the importance sampling scheme, we turn to a Markov chain Monte Carlo method known as the Glauber dynamics to generate samples from the exponential random graph.
The Glauber dynamics refers to a Markov chain whose stationary distribution is the Gibbs measure $\ProbQ_{n}^{h,\beta,\alpha}$.
The samples $\tilde{X}_k$ from the Glauber dynamics are used to form the importance sampling estimator $\tilde{M}_K$ in \eqref{eq: IS estimator}.
The variance of $\tilde{M}_K$ clearly also depends on the correlation between the successive samples.
However, in this paper, rather than focus on the effect of correlation on the variance of $\tilde{M}_K$, we instead investigate and compare the optimality of the importance sampling schemes, and show that importance sampling is a viable method for moderate values of $n$.

\subsection*{Glauber dynamics.}
For the exponential random graph $\calG_{n}^{h,\beta,\alpha}$, the Glauber dynamics proceeds as follows.

Suppose we have a graph $X=(X_{ij})_{1\leq i<j\leq n}$. The graph $\tilde{X}$ is generated from $X$ via the following procedure.

\begin{enumerate}[1.]
  \item Choose an edge $X_{ij}$, for some $(i,j)$, from $X$ uniformly at random. 
  
  \item For the new graph $\tilde{X}$, fix all other edges $\tilde{X}_{i'j'} = X_{i'j'}$, for $(i',j')\neq(i,j)$.
  
  \item Conditioned on all other edges fixed, pick 
  \[
  \tilde{X}_{ij} \sim \text{Bern} (\varphi)
  \]
  where 
  \[
  \varphi = \frac{e^{h + (\beta/n) (L_{ij} + M_{ij})^\alpha (n^3/6)^{1-\alpha}}}
  {e^{h + (\beta/n) (L_{ij} + M_{ij})^\alpha (n^3/6)^{1-\alpha}} + e^{(\beta/n) M_{ij}^\alpha (n^3/6)^{1-\alpha}}}
  \]
  and where
  \[
  L_{ij} = \sum_{k\neq i,j} X_{ik} X_{jk} , \qquad \text{and} \qquad
  M_{ij} = \sum_{(k,l,m)\nsupseteq(i,j)} X_{kl} X_{km} X_{lm},
  \]
  is the number of 2-stars in $X$ with a base at the edge $X_{ij}$, and the number of triangles in $X$ not involving the edge $X_{ij}$, respectively.

  \item If conditioning on $A_J$ is used, check if $\tilde{X}$ is in $A_J$. If not, revert to $X$.
\end{enumerate}

In step 4, a conditioning of the Gibbs measure is discussed in Section \ref{sec:condGibbsmeas}.

For the classical exponential random graph with $\alpha=1$, the probability $\varphi$ in the Glauber dynamics has a neater expression,
\[
\varphi = \frac{e^{h+\beta L_{ij}/n}}{1 + e^{h + \beta L_{ij}/n}}.
\]

At each MCMC step, if $X_{ij} \neq \tilde{X}_{ij}$, then $E(\tilde{X})$ differs from $E(X)$ by one edge, and $T(\tilde{X})$ differs from $T(X)$ by $nL_{ij}$ triangles.
The stationary distribution of the Glauber dynamics is the Gibbs measure $\ProbQ_{n}^{h,\beta,\alpha}$ that defines the exponential random graph $\calG_{n}^{h,\beta,\alpha}$.
Regarding the mixing time of the Glauber dynamics, \cite{BhaBreSly08} showed for the case $\alpha=1$ that if the variational form for the free energy of the Gibbs measure $\ProbQ_{n}^{h,\beta,1}$,
\begin{align} \label{eq: var form alpha=1}
\sup_{0\leq u\leq1} [\frac{h}{2} u + \frac{\beta}{6} u^{3} - I(u)],
\end{align}
has a unique local maximum, then the mixing time of the Glauber dynamics is $\calO(n^2 \log n)$; otherwise, the variational form has multiple local maxima, and the mixing time is $\calO(e^n)$.

Clearly, the importance sampling tilt must be chosen so that the mixing time of the Glauber dynamics is $\calO(n^2 \log n)$.

\subsection{Example 1}
\label{subsec: Numerical results replica symmetric}

The importance sampling scheme was performed for $p=0.35, t=0.4$, in the replica symmetry phase.
We use the Glauber dynamics to draw samples from the edge tilt with parameters $h=h_t,\beta=0$, as well as from the triangle tilt with parameters $h=h_p, \alpha=1$ and $\beta = \frac{h_t-h_p}{t^2}$ as in \eqref{prop: RSP beta alpha characterize}.
The mixing time for both tilts is $\calO(n^2 \log n)$.
In addition to the edge and triangle tilts, we also consider a family of ``hybrid'' tilts with parameters $h=h_q$ for $q\geq p$, and $\alpha=1$ and 
\be
\beta = \beta_q = \frac{h_t-h_q}{t^2}. \label{hbetaRelate}
\ee
With these parameters, the variational form for the free energy of the corresponding Gibbs measure is uniquely maximized at $t$. 
Thus, the hybrid tilt satisfies the necessary condition in Proposition \ref{prop: ERu*NoOpt} for optimality (i.e., \eqref{eq: necessary cond for opt} does not hold).
By Theorem \ref{thm: Exponential graph free energy}, the corresponding exponential random graph $\calG_{n}^{h_q,\beta_q,1}$ is indistinguishable from the \ER graph $\calG_{n,t}$ and has a mean triangle density of $t^3$, in the sense of \eqref{QTmean2}.

In the simulations, we used the hybrid tilts with $h=h_q$, for $q=0.35, 0.36, \dots, 0.4$, and $\beta_q$ satisfying \eqref{hbetaRelate}.
With this notation, in fact, $q=p=0.35$ corresponds to the triangle tilt while $q=t=0.4$ corresponds to the edge tilt.
Table \ref{tab: HT beta_st qhat} verifies the accuracy of the importance sampling estimates for $\mu_{n} := \ProbP(\calG_{n,p} \in \calW_t)$ using the tilts $\ProbQ_{n}^{h_q,\beta_q,1}$.
Also shown is the estimate for the log probability, $\frac{1}{n^2} \log  \ProbP(\calG_{n,p} \in \calW_t)$, which can be seen to approach the LDP rate 
\begin{equation*}
  \lim_{n\rightarrow \infty} \frac{1}{n^2} \log  \ProbP(\calG_{n,p} \in \calW_t) = - I_{p}(t) \approx - 0.002694.
\end{equation*}
as $n$ is increased.
Table \ref{tab: HT beta_st qhatvar} shows the estimated values of the variance of the estimator, $Var_{\ProbQ_n}(\hat{q}_n)$, where $\hat{q}_n = {\bf1}_{\calW_t} \frac{d\ProbP_{n,p}}{d\ProbQ_{n}^{h,\beta}}$, as well as the log second moment $\frac{1}{n^2} \log \Mean_{\ProbQ_n} [\hat{q}_n^2]$.
The variance of the estimator for all the hybrid and edge tilts appear to be comparable to the optimal triangle tilt, and the log second moment likewise appears to converge towards $ - 2I_p(t) \approx -0.0053869$.
Notice that the parameters $p=0.35,t=0.4$ do not satisfy the hypothesis of Proposition \ref{prop:edgeNoOpt}, so the assertion of non-optimality of the edge tilt may not apply in this case. Regardless, non-optimality is not apparent for mid-sized graphs up to $n=96$.

For $n=16,32,64$, the number of MCMC samples used was $5\times 10^4 \,n^2 \log n$, while for $n=96$, the number of MCMC samples used was $10^5 \,n^2 \log n$.

\begin{table}
\small\centering
  \begin{tabular}{|c|c|c|c|c|c|c|}
  \hline
  \hfill $q$ &0.35&0.36&0.37&0.38&0.39&0.4   \\
         $n$ &    &    &    &    &    &      \\ \hline
16&0.12475&0.1247&0.12521&0.12425&0.12441&0.12435 \\
  &(-0.008131) &(-0.008132)&(-0.008116)&(-0.008146)&(-0.008141)&(-0.008143) \\ \hline
32&0.01107&0.011056&0.011116&0.010941&0.010972&0.010729\\
  &(-0.004398) &(-0.004399)&(-0.004394)&(-0.004409)&(-0.004407)&(-0.004429)\\ \hline
64&2.1919e-06&2.0283e-06&2.6073e-06&5.3287e-07&1.3822e-06&3.5772e-06\\
  &(-0.003181) &(-0.003200)&(-0.003139)&(-0.003527)&(-0.003294)&(-0.003062)\\ \hline
96&1.1036e-11&1.6868e-11&2.0805e-11&4.4039e-11&2.6124e-11&4.497e-11\\
  &(-0.002738) &(-0.002692)&(-0.002669)&(-0.002587)&(-0.002644)&(-0.002585)\\ \hline
  \end{tabular}
  \caption{Comparison of the estimates for the probability $\mu_{n}$ (top number) for varying tilts with parameters $(h_q,\beta,1)$, where $\beta$ is defined in \eqref{hbetaRelate}. Also shown is the log probability $\frac{1}{n^2} \log \ProbP(\calG_{n,p} \in \calW_t)$ (lower number).} 
  \label{tab: HT beta_st qhat}
\end{table}

\begin{table}
\small\centering
  \begin{tabular}{|c|c|c|c|c|c|c|}
  \hline
$n\ \backslash\ q$ &0.35&0.36&0.37&0.38&0.39&0.4 \\ \hline
16&0.030839&0.03007&0.030173&0.030553&0.031902&0.034105\\
  &(-0.01199) &(-0.01206)&(-0.01204)&(-0.01203)&(-0.01191)&(-0.01173)\\ \hline
32&0.00039386&0.00038614&0.00040055&0.00042058&0.00047462&0.00052598\\
  &(-0.007391) &(-0.007407)&(-0.007377)&(-0.007347)&(-0.007253)&(-0.00718)\\ \hline
64&2.9982e-11&2.5716e-11&4.6804e-11&2.3144e-12&1.9783e-11&1.8035e-10\\
  &(-0.005879) &(-0.005917)&(-0.005774)&(-0.006513)&(-0.005995)&(-0.005461)\\ \hline
96&1.157e-21&2.7721e-21&4.9044e-21&2.8661e-20&1.4562e-20&6.8628e-20\\
  &(-0.005220) &(-0.005125)&(-0.005065)&(-0.004876)&(-0.004951)&(-0.004785)\\ \hline
  \end{tabular}
  \caption{Comparison of the estimates for the variance $Var_{\ProbQ} (\hat{q}_n)$ (top number) for varying tilts with parameters $(h_q,\beta,1)$, where $\beta$ is defined in \eqref{hbetaRelate}. Also shown is the log second moment $\frac{1}{n^2} \log \Mean_{\ProbQ}[\hat{q}_n^2]$ (lower number). }
  \label{tab: HT beta_st qhatvar}
\end{table}

Both the random graphs corresponding to the triangle or edge tilts are expected by \eqref{QTmean2}, \eqref{QEmean2} to have triangle density of $t^3$ and edge density of $t$, on average. 
However, there is a difference between the way that the triangle and edge tilts produce events in $\set{\calT(f) \geq t^3}$, which is that the edge tilt tends to produce more successful samples in $\set{\calT(f) \geq t^3}$ with higher edge density, compared to the triangle tilt. (See Figure \ref{fig: Edgehist}.) This is attributable to the fact that the edge tilt penalizes successful samples that contain the desired triangle density but with lower than expected edge density.

\begin{figure}
  \centering
  \includegraphics[width=.8\linewidth]{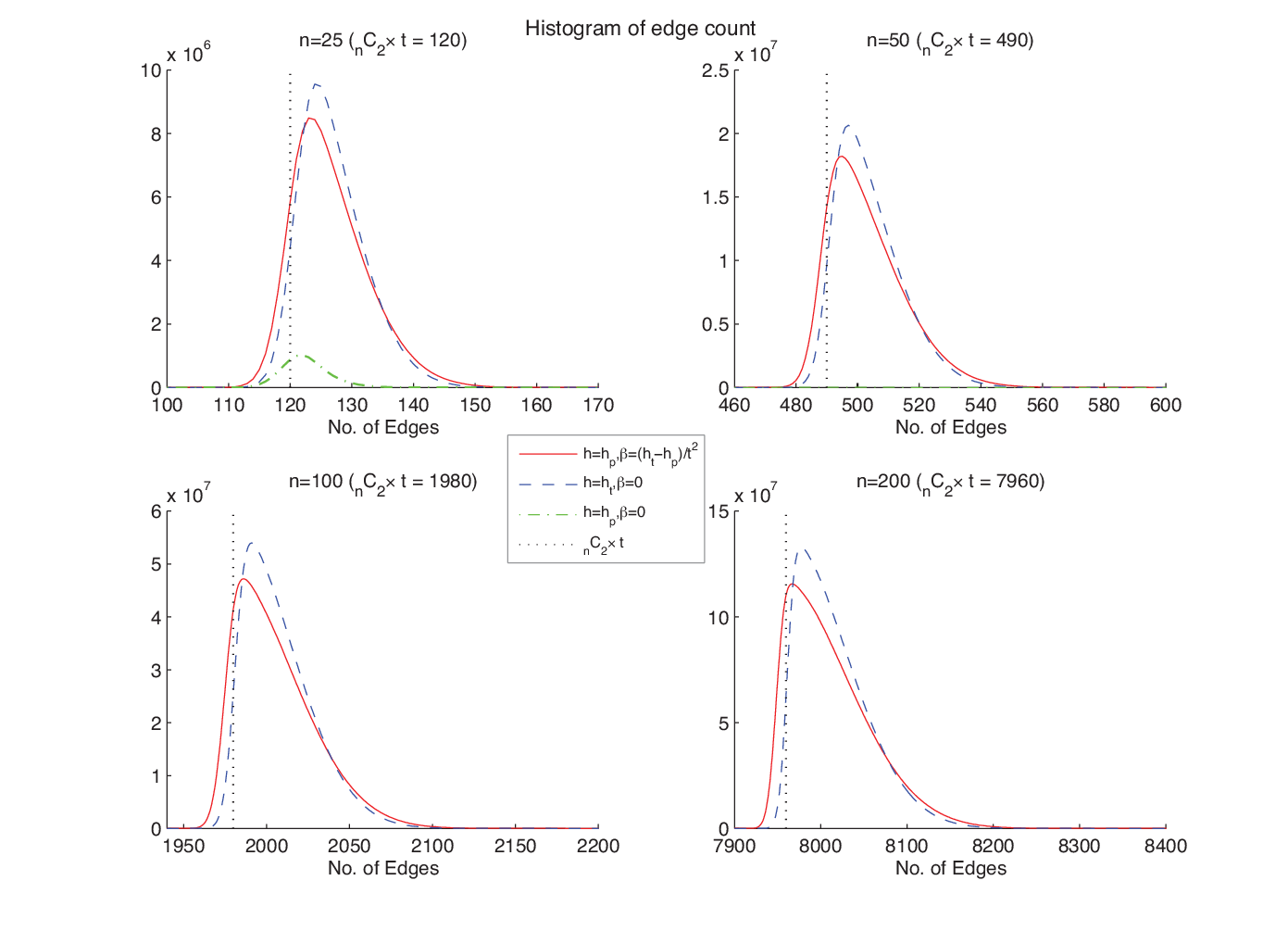}
  \caption{Histogram of the number of edges in the samples obtained using the importance sampling scheme based on the triangle tilt (solid red line) and edge tilts (dashed blue line), conditioned on the rare event $\{T(X)\geq \binom{n}{3}t^3\}$. The dotted green line in the top left panel shows the histogram for direct Monte Carlo sampling. The vertical line indicates the expected number of edges of the graph $\calG_{n,p}$ conditioned on the rare event.
}
  \label{fig: Edgehist}
\end{figure}

\subsection{Example 2: Using $\alpha\neq1$ or conditioned Gibbs measures}
\label{sec:condGibbsmeas}

The importance sampling scheme was next performed for $p=0.2,t=0.3$, in the replica symmetric phase. We again use the Glauber dynamics to draw samples from the edge tilt with parameters $h=h_t,\beta=0$; the mixing time here is $\calO(n^2 \log n)$. In contrast to the previous example, for the triangle tilt with $\alpha=1$, the variational form \eqref{eq: var form alpha=1} has two local maxima, resulting in a mixing time of $\calO(e^n)$. Instead, we will use a triangle tilt with $\alpha=2/3$. Thanks to the fact that $(p,t)$ is in the replica symmetric phase and $\phi_p$ is differentiable at $t$, Proposition \ref{prop: general beta alpha characterize} implies that for $\alpha=2/3$, we choose
$$
\beta = \frac{h_t-h_p}{(2/3) t}.
$$
The simulation results for the importance sampling scheme using the triangle tilt with $\alpha=2/3$ is shown in Tables \ref{tab: mean estimate, conditioned and alpha tilts}, \ref{tab: var estimate, conditioned and alpha tilts} and compared with the results for the edge tilt. The simulation using direct Monte Carlo sampling is also shown for $n=32$. We see that the triangle tilt with $\alpha=2/3$ outperforms both the edge tilt and direct Monte Carlo simulation. Notice that the parameters $p=0.2,t=0.3$ do not satisfy the hypothesis of Proposition \ref{prop:edgeNoOpt}, but the edge tilt already appears to be non-optimal for mid-sized graphs up to $n=64$

Alternatively, we also consider a modification to the triangle tilt with $\alpha=1$ and $\beta_p = \frac{h_t-h_p}{t^2}$ as in \eqref{prop: RSP beta alpha characterize}. This modification draws samples from the Gibbs measure $\ProbQ_n^{h_p,\beta_p,1}$ conditioned on the event that the edge and triangle densities not exceed a given threshold $r$. To be specific, let
\begin{equation} \label{eq: A_r}
A_r = \{f\in\calW :\, \calT(f)\leq r^3 \text{ and } \calE(f) \leq r\}
\end{equation}
for some $r>t$, and let the conditioned triangle tilt be defined by the Gibbs measure conditioned on $A_r$,
\begin{align} \label{eq: conditioned Gibbs measure}
  \tilde{\ProbQ}_{n,A_r}^{h_p,\beta_p,1} (X) \propto \begin{cases}
    e^{n^2(\frac{h}{2}\calE(X) + \frac{\beta}{6}\calT(X))}, &\text{if }X\in A_r\\
    0 &\text{if } X\notin A_r
  \end{cases}.
\end{align}
In the numerical simulations, the threshold is chosen to be $r \approx 0.4272 >t$, which is a local minimum of the variational form \eqref{eq: var form alpha=1}. The motivation for this choice of threshold $r$ is discussed in the Appendix. The results for the conditioned triangle tilt are shown in Tables \ref{tab: mean estimate, conditioned and alpha tilts}, \ref{tab: var estimate, conditioned and alpha tilts}, which indicate that both triangle tilts perform comparably and both outperform the edge tilt.

\begin{table}
  \begin{tabular}{|c|c|c|c|c|}
  \hline
  $n$ & Triangle tilt $\alpha=2/3$ & Conditioned triangle tilt & Edge tilt & Monte Carlo \\\hline
  16 & 0.0064    & 0.006474   &  0.006285  &        \\
     & (-0.0197) & (-0.0197)  &  (-0.0198) & \\\hline
  32 & 4.3148e-7 & 3.5488e-7  & 3.3878e-7  & 3.7758e-7 \\
     & (-0.0143) & (-0.0145)  &  (-0.0145) & (-0.0144) \\\hline
  48 & 1.3976e-13& 1.1418e-14 & 1.2039e-12 &  ---      \\
     & (-0.0128) & (-0.0139)  &  (-0.0119) &  ---      \\\hline
  64 & 6.1882e-21& 2.9076e-23 & 1.8316e-19 &  ---      \\
     & (-0.0136) & (-0.0127)  & (-0.0105)  &  ---      \\\hline
  \end{tabular}
  \caption{Estimates for the probability $\mu_{n}$. In parenthesis is the estimator for the log probability $\frac{1}{n^2} \log \mu_n$.}
  \label{tab: mean estimate, conditioned and alpha tilts}
  \vskip10pt
  \begin{tabular}{|c|c|c|c|c|}
    \hline
  $n$ & Triangle tilt $\alpha=2/3$ & Conditioned triangle tilt & Edge tilt & Monte Carlo \\\hline
  16 & 1.5059e-4  & 2.4166e-4  &  1.0391e-3 &           \\
     & (-0.0334)  & (-0.0319)  &  (-0.0267) &           \\\hline
  32 & 1.5222e-12 & 1.9083e-12 & 6.7116e-11 & 3.7758e-7 \\
     & (-0.0265)  & (-0.0263)  &  (-0.0229) & (-0.0144) \\\hline
  48 & 2.6058e-25 & 3.268e-27  & 2.4737e-20 &  ---      \\
     & (-0.0245)  & (-0.0265)  &  (-0.0196) &  ---      \\\hline
  64 & 7.1703e-40 & 2.8806e-44 & 1.2806e-33 &  ---   \\
     & (-0.0220)  & (-0.0245)  &  (-0.0185) &  ---   \\\hline
  \end{tabular}
  \caption{Estimates for the variance $Var_{\ProbQ_{n}}(\hat{q}_n)$. In parenthesis is the estimate for the log second moment, $\frac{1}{n^2} \log \Mean_{\ProbQ_n} [\hat{q}_n^2]$.}
  \label{tab: var estimate, conditioned and alpha tilts}
\end{table}

\commentout{
\section{Conclusion and open problems}
\label{sec:conc}
We end with a brief discussion on extensions and open problems of the main results in this work. 

\begin{enumeratea}
    \item {\bf Full characterization of the replica breaking regime:} \todo[inline]{Chia you had some comments about this in our discussion regarding what you and Jim had thought was enough (smoothness of the $\cI(\cdot)$ function).\\These are my thoughts. Please check. (Chia)} In this paper, we characterized subregimes of the $(p,t)$-phase space by the sets $\mathcal{S}_\alpha$. We showed that $\bigcap_{\alpha>0} \mathcal{S}_\alpha$ contains values in the replica breaking phase, and thus is strictly larger than the replica symmetric phase $\mathcal{S}_{2/3}$. It was discussed that $\mathcal{S}_{2/3}$ characterizes the replica symmetric phase; however, it is not known whether the set $\bigcap_{\alpha>0} \mathcal{S}_\alpha$ characterizes the entire $(p,t)$-phase space; that is, whether \emph{every} rare \ER graph $\{T(\cG_{n,p}\geq\binom{n}{3}t^3\}$ is asymptotically indistinguishable to \emph{some} exponential random graph. The difficulty arises from the fact that, unlike in the replica symmetric phase where the function $\phi_p(t)$ exactly equals $I_p(t)$, in the replica breaking phase explicit expressions nor differentiability of the function $\phi_p(t)$ is not completely known.
    \item {\bf Mixing time of Glauber dynamics:} Although we proved that the tilt inherited from the exponential random graph model $\ProbQ_n^{h,\beta,\alpha}$ is asymptotically optimal, assuming that independent samples are drawn, one issue that has not been considered is the overall effect of the mixing time of the Glauber dynamics on the total computational cost, when it is used to obtain an MCMC to approximately sample a graph with distribution $\ProbQ_n^{h,\beta,\alpha}$. 
Clearly, the length of the mixing time of the Glauber dynamics impacts the computational cost required to ensure the convergence of the estimator.
In \cite{BhaBreSly08} the case $\alpha=1$ is considered and it is shown that there are exactly 2 regimes of the parameters $(h,\beta)$, one in which the associated Glauber dynamics mixes in $O(n^2\log{n})$ time while in another the mixing time is $\Omega(\exp(cn))$ where $c = c(\beta, h)> 0$. 
We defer extending this analysis to the case of general $\alpha\in (0,1)$ to another paper.  
    \item {\bf Large deviations and importance sampling for general subgraphs:} In this paper we considered the issue of estimating rare events for \emph{upper tails of triangle counts} in \ER graphs and proved, via a fundamental duality between the rare \ER graphs and exponential random graphs, that the triangle tilt inherited from the exponential random graphs is always optimal, while the naive edge tilt inherited from the LDP might not be optimal. This entire program for deriving the duality between the rare \ER graphs and exponential random graphs developing efficient IS estimators should extend, with necessary modifications, for general subgraph counts or lower tails of subgraphs counts.
\todo[inline]{This item was edited. Please double check for accuracy! (Chia)}
    \item {\bf Other regimes for edge connection probability:} This paper considered the case of dense \ER random graphs where the edge connection probability $p$ was fixed as $n\to\infty$. The analysis heavily relied on the beautiful theory of graphons and dense graph limits developed in \cite{LovSze06,LovSze07,Borgsetal08} and the fundamental work on large deviations using such objects developed in \cite{ChaVar11,ChaDia11}. Understanding what happens when $p(n)\to 0$ is a much more challenging issue and even understanding the LDP in this regime is currently one of the fundamental questions in this area. 
    \item {\bf Numerical findings: } \todo[inline]{Do we want to say anything on our numerics?\\Not really... it's just a proof of principle... however, see point (b). (Chia)}
\end{enumeratea}
}

\appendix



\section{Characterizing the phase diagrams}
\label{sec: (appdx) S_alpha}

We present in this appendix section a framework to define subregimes of the $(p,t)$ phase space, which extends the set up from \cite{LubZhao12}.

Recall that $(p,t)$ satisfies the \emph{minorant condition with parameter $\alpha$} if the point $(t^{3\alpha}, \phi_p(t))$ lies on the convex minorant of the function $x\mapsto \phi_p(x^{1/3\alpha})$.
Using the minorant condition and Lemma \ref{lem: minorant condition}, we define a parameterized family of subregimes of the $(p,t)$-phase space.


\begin{definition} \label{def: S_alpha}
  Let $\alpha\in[0,1]$. We define the regime $\mathcal{S}_{\alpha}$ to be the set of parameters $(p,t)$ for which the minorant condition holds with $\alpha$.

  Further, we define $\mathcal{S}_{\alpha}^\circ \subset \mathcal{S}_{\alpha}$ to be the regime where, considering a subdifferential $\frac{\beta}{6}$ of the convex minorant of $x\mapsto \phi_p(x^{1/3\alpha})$, the variational form 
$$\sup_{0\leq u\leq1} [\frac{\beta}{6}u^{3\alpha} - \phi_p(u)]$$ 
is \emph{uniquely} maximized at $t$.
\qed
\end{definition}


Using the Definition \ref{def: S_alpha}, we can characterize the replica symmetry phase for conditioned \ER graphs, Definition \ref{def:replica symmetric}, in terms of $S_\alpha$. To this effect, the next lemma follows directly from Definition \ref{def:replica symmetric} using the arguments in \cite{LubZhao12} and \cite[Theorem 4.3]{ChaVar11}.

\begin{lemma} \label{lem: S_2/3}
  $\mathcal{S}_{2/3}^\circ$ is exactly the replica symmetric phase.
\end{lemma}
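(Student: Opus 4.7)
The plan is to establish both inclusions by combining the two variational expressions for the limiting free energy: the one from \eqref{eq: free energy} in Theorem~\ref{thm: Exponential graph free energy} (in terms of $\phi_p$) and the one from Theorem~\ref{thm: ChaDai11 Thm 4.1}(b) (in terms of $I$). Using the identity $I(u) = I_p(u) + \tfrac{h_p}{2}u + \tfrac{1}{2}\log(1-p)$ to rewrite $V(u;h_p,\beta,2/3)$, one sees that the two formulas give
\[
  \sup_{0 \le u \le 1}\Big[\tfrac{\beta}{6}u^{2} - \phi_p(u)\Big] \;=\; \sup_{0 \le u \le 1}\Big[\tfrac{\beta}{6}u^{2} - I_p(u)\Big]
\]
for every $\beta > 0$, both equal to $\lim_n \psi_n^{h_p,\beta,2/3} + \tfrac{1}{2}\log(1-p)$. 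Since $\phi_p \le I_p$ pointwise, any $u^\ast$ maximizing the $I_p$-form automatically maximizes the $\phi_p$-form, and $\phi_p(u^\ast) = I_p(u^\ast)$ at such a point.

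\medskip
\noindent\textbf{Forward inclusion $\calS_{2/3} \subseteq $ replica symmetric phase.} Fix $(p,t) \in \calS_{2/3}$ with witness $\beta$. By assumption the $\phi_p$-form is uniquely maximized at $t$, so any maximizer of the $I_p$-form (which must also maximize the $\phi_p$-form) equals $t$, and $\phi_p(t) = I_p(t)$. Theorem~\ref{thm: ChaDai11 Thm 4.1}(b) then yields that $\calG_n^{h_p,\beta,2/3}$ is asymptotically indistinguishable from the constant graphon $t$, while Theorem~\ref{thm: Exponential graph free energy} gives that it is asymptotically indistinguishable from the minimal set $\calF^*_t$. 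Uniqueness of the minimal set forces $\calF^*_t = \{t\}$ (the constant graphon), which is precisely replica symmetry at $(p,t)$.

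\medskip
\noindent\textbf{Reverse inclusion: replica symmetric phase $\subseteq \calS_{2/3}$.} By the Lubetzky--Zhao characterization (Theorem~4.3 of \cite{LubZhao12}, the converse direction of Theorem~\ref{thm: ChaDai11 Thm 4.1}(b)), replica symmetry at $(p,t)$ is equivalent to the existence of $\beta^\ast > 0$ such that $V(u;h_p,\beta^\ast,2/3)$, equivalently $\tfrac{\beta^\ast}{6}u^2 - I_p(u)$, is uniquely maximized at $u=t$. Set $S := \tfrac{\beta^\ast}{6}t^2 - I_p(t) = \tfrac{\beta^\ast}{6}t^2 - \phi_p(t)$ using $\phi_p(t)=I_p(t)$. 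By the equality of the two suprema above, $\phi_p(u) \ge \tfrac{\beta^\ast}{6}u^2 - S$ for all $u$, which translates (in the variable $x=u^2$) to saying that the line $L(x) = \tfrac{\beta^\ast}{6}x - S$ satisfies $L(x) \le \phi_p(\sqrt{x})$ with equality at $x=t^2$. Hence $(t^2,\phi_p(t))$ lies on the convex minorant of $x \mapsto \phi_p(\sqrt{x})$ with subdifferential $\beta^\ast/6$, i.e.\ the minorant condition with $\alpha=2/3$ holds. Unique maximization of the $\phi_p$-form at $t$ then follows: any additional maximizer $u_0 \neq t$ would be a second tangent point of $L$ with $\phi_p(\sqrt{\cdot})$; since $\phi_p \le I_p$ and $L$ touches $I_p(\sqrt{\cdot})$ only at $t^2$ (by the Lubetzky--Zhao uniqueness), such a $u_0$ would satisfy $\phi_p(u_0) < I_p(u_0)$, which is ruled out by the Lubetzky--Zhao description of the convex minorant of $x \mapsto I_p(\sqrt{x})$ combined with the forward direction already established.

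\medskip
\noindent\textbf{Main obstacle.} The delicate step is the uniqueness transfer in the reverse direction: unique maximization of the $I_p$-form does not automatically imply unique maximization of the $\phi_p$-form, since $\phi_p$ may be strictly less than $I_p$ at replica-breaking values of $u_0$. Resolving this requires leaning on the explicit geometric description of the convex minorant of $x \mapsto I_p(\sqrt{x})$ given in \cite{LubZhao12}, together with the inequality $\phi_p \le I_p$ and the structure of the set of optimizers $\calF^*_{u_0}$ in the replica-breaking regime.
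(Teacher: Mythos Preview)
The paper itself gives no detailed proof of this lemma; it simply records that the statement ``follows directly from the arguments in \cite{LubZhao12} and Theorem~4.3 in \cite{ChaVar11}.'' So your write-up is already more explicit than what appears in the paper, and the overall strategy---matching the two free-energy variational formulas, one in terms of $\phi_p$ from Theorem~\ref{thm: Exponential graph free energy} and one in terms of $I_p$ from Theorem~\ref{thm: ChaDai11 Thm 4.1}(b)---is precisely what is implicit in those citations.

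Your forward inclusion is correct. The observation that any maximizer of the $I_p$-form is automatically a maximizer of the $\phi_p$-form, with $\phi_p=I_p$ at that point, is a clean consequence of $\phi_p\le I_p$ together with the equality of the two suprema, and deducing replica symmetry via the minimal set $\calF^*_t$ is legitimate.

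The gap you flag in the reverse inclusion is real, and the sketch you offer in the ``Main obstacle'' paragraph is not a proof: knowing that a putative second maximizer $u_0$ satisfies $\phi_p(u_0)<I_p(u_0)$, i.e.\ that $(p,u_0)$ is replica breaking, only tells you via the forward direction that $(p,u_0)\notin\calS_{2/3}$---but that merely says no $\beta$ makes $u_0$ a \emph{unique} maximizer, not that $u_0$ cannot appear as a second maximizer for the particular $\beta^*$ attached to $t$. Here is a direct way to finish. With $(p,t)$ replica symmetric and $\beta^*$ the Lubetzky--Zhao parameter making $V(\cdot;h_p,\beta^*,2/3)$ uniquely maximized at $t$, suppose some $u_0\neq t$ also maximizes the $\phi_p$-form. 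The proof of Theorem~\ref{thm: Exponential graph free energy} shows that the set of maximizers of $\sup_{f\in\calW}[\calH(f)-\calI(f)]$ is exactly $\bigcup_{v^*}\calF^*_{v^*}$ over all maximizers $v^*$ of the $\phi_p$-form; this set therefore contains the non-empty disjoint pieces $\calF^*_t\subset\partial\calW_t$ and $\calF^*_{u_0}\subset\partial\calW_{u_0}$. By Lemma~\ref{lem: minimalF^*}(ii) this union is the minimal set for $\calG_n^{h_p,\beta^*,2/3}$. But Theorem~\ref{thm: ChaDai11 Thm 4.1}(b) says the same exponential graph is asymptotically indistinguishable from the single constant graphon $t$, so its minimal set is $\{t\}$---a contradiction. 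In other words, the uniqueness transfer at $\alpha=2/3$ is not a geometric fact about $\phi_p$; it is forced by the Lubetzky--Zhao result that the $\alpha=2/3$ exponential graph is always indistinguishable from a single \ER graph.
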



We highlight that in Definition \ref{def: S_alpha}, there exists a subdifferential $\frac{\beta}{6}$ such that the variational form is maximized at $t$, due to Lemma \ref{lem: minorant condition}, but $t$ may not be the unique maximizer; whereas the definition of $\mathcal{S}_\alpha^\circ$ requires that $t$ be the unique maximizer. The uniqueness requirement is convenient to make the clean connection with the replica symmetry phase for conditioned \ER graphs. As seen from Figure \ref{fig:replicaphase}, the replica symmetry phase $\mathcal{S}_{2/3}^\circ$ is the light and dark gray region to the right of the dotted curve, excluding the dotted curve. $\mathcal{S}_{2/3}$ includes the dotted curve.


\begin{figure}
  \centering
  \includegraphics[width=.5\linewidth]{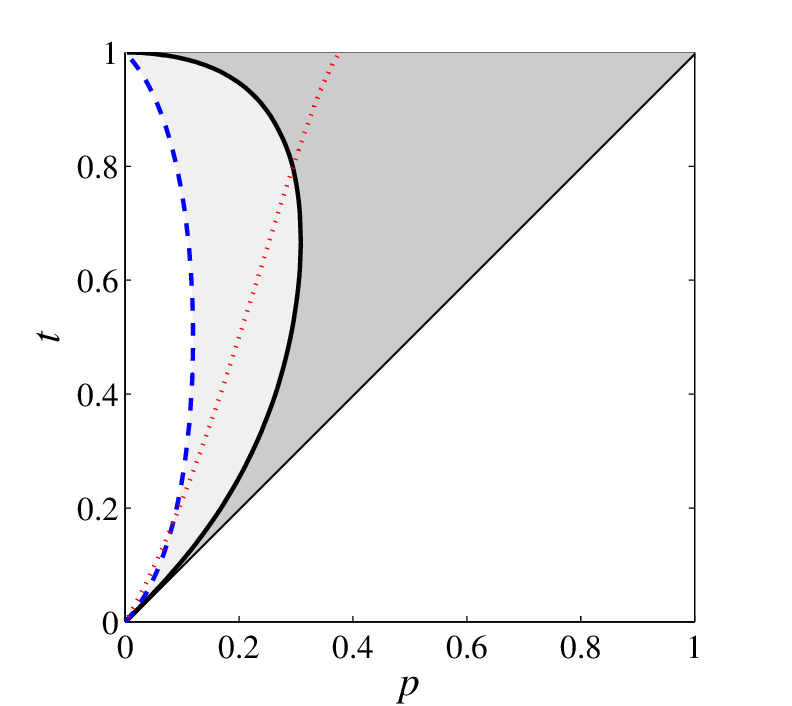}
  \caption{$\mathcal{S}_{1}^\circ$ is the dark gray region to the right of the solid curve (not including the solid curve). $\mathcal{S}_{2/3}^\circ$, the replica symmetric phase, is the light gray region to the right of the dashed curve (not including the dashed curve), together with the dark gray region. The diagonal red dotted line shows $G(t)=0$, where $G$ is the function in \eqref{eq: G(t)}. For parameters above this line, the edge tilt does not give an optimal importance sampling estimator. The straight line is $t = p$.}
  \label{fig:replicaphase}
\end{figure}


Using the Definition \ref{def: S_alpha}, we can rephrase Prop \ref{prop: general beta alpha characterize} and Lemma \ref{lem: minorant cond attain} as follows.

\begin{corollary}
  If $(p,t) \in \mathcal{S}_\alpha$, then there exists a triangle tilt that produces an optimal scheme.

  Moreover, the regime $\bigcup_{\alpha>0} \mathcal{S}_{\alpha}$ where an optimal triangle tilt exists is strictly larger than the replica symmetric phase, and contains a nontrivial subset of the replica breaking phase. 
\end{corollary}


Finally, we show in the next lemma that the union, $\bigcup_{\alpha>0} \mathcal{S}_{\alpha}$, is an increasing union as $\alpha\rightarrow0$.
A consequence of this lemma is that if $(p,t)\in \mathcal{S}_\alpha$ for some $\alpha\in[0,1]$, then for any $\alpha'\in [0,\alpha]$, the triangle tilt with parameters $(h_p,\beta',\alpha')$ also produces an optimal scheme, where $\beta'$ is appropriately chosen depending on $\alpha'$.

\begin{lemma} \label{lem: (appdx) S_alpha ordering}
  Let $\mathcal{S}_\alpha$ be defined in Definition \ref{def: S_alpha}. Then $\mathcal{S}_{\alpha'} \subset \mathcal{S}_{\alpha}$ for $0<\alpha<\alpha'$.
\end{lemma}


\begin{proof} 
  Denote $\phi_p^{\alpha}(x) = \phi_p(x^{1/3\alpha})$ and let $\hat{\phi}_p^{\alpha}(x)$ be the convex minorant of $\phi_p^{\alpha}(x)$. Then
\[
\phi_p^{\alpha'}(x) = \phi_p(x^{1/3\alpha'}) = \phi_p\big((x^{\alpha/\alpha'})^{1/3\alpha}\big) = \phi_p^{\alpha}(x^{\alpha/\alpha'}).
\]
Define $\eta(x) = \hat{\phi}_p^{\alpha'}(x^{\alpha'/\alpha})$. 
Let $K$ be the set where $\eta(x) = \phi_p^{\alpha'}(x^{\alpha'/\alpha})$ for $x\in K$.
Then 
\[
\eta(x) \leq \phi_p^{\alpha'}(x^{\alpha'/\alpha}) = \phi_p^{\alpha}(x)
\]
with equality occurring iff $x\in K$.
(The interpretation of $K$ is that $t^{3\alpha} \in K$ if and only if $(p,t)$ satisfies the minorant condition with $\alpha'$.)
Since $\frac{\alpha'}{\alpha} > 1$, the function $\eta(x)$ is convex and is less than $\phi_p^{\alpha}(x)$, hence it must be less than the convex minorant, $\eta(x) \leq \hat{\phi}_p^{\alpha}(x)$.
For $x\in K$, 
\[
\phi_p^{\alpha}(x) = \eta(x) \leq \hat{\phi}_p^{\alpha}(x) \leq \phi_p^{\alpha}(x)
\]
so $(x,\phi_p^{\alpha}(x))$ lies on the convex minorant $\hat{\phi}_p^{\alpha}(x)$ for all $x \in K$.
Hence, if $(p,t)$ satisfying the minorant condition with $\alpha'$, then $t^{3\alpha} \in K$ and $(t^{3\alpha},\phi_p(t))$ lies on the convex minorant $\hat{\phi}_p^{\alpha}(x)$, implying that $(p,t)$ satisfies the minorant condition with $\alpha$.

Now let $(p,t)$ satisfy the minorant condition with $\alpha'$, and suppose that $\frac{\beta'}{6}$ is a subdifferential of $\hat{\phi}_p^{\alpha'}(x)$ at the point $t^{3\alpha'}$ such that $\sup [\frac{\beta'}{6}u^{3\alpha} - \phi_p(u)]$ is uniquely maximized at $t$. According to the arguments in the proof of Lemma \ref{lem: minorant condition}, this means that the subtangent line 
\[
\ell_{\alpha'}(x) := \frac{\beta'}{6}(x-t^{3\alpha'}) - \phi_p(t)
\] 
lies below $\phi_p^{\alpha'}(x)$ and touches it at exactly one point $t^{3\alpha'}$. 
Let $\nu(x) = \ell_{\alpha'}(x^{\alpha'/\alpha})$. 
We have that $\nu(t^{3\alpha}) = \phi_p^{\alpha'}(t^{3\alpha'}) = \phi_p^{\alpha}(t^{3\alpha})$ and $\nu'(t^{3\alpha}) = \frac{\beta'}{6} \frac{\alpha'}{\alpha} t^{3(\alpha'-\alpha)}$.
Since $\frac{\alpha'}{\alpha}>1$, $\nu(x)$ is convex, and the line 
\[
\ell_{\alpha}(x) := \frac{\beta}{6}(x-t^{3\alpha}) - \phi_p(t),
\]
where $\frac{\beta}{6} =\nu'(t^{3\alpha})$, is tangent to $\nu(x)$ at the point $t^{3\alpha}$ and lies below $\nu(x)$.
For $x\neq t^{3\alpha}$, 
\[
\nu(x) = \ell_{\alpha'}(x^{\alpha'/\alpha}) < \phi_p^{\alpha'}(x^{\alpha'/\alpha}) = \phi_p^{\alpha}(x),
\]
so $\ell_{\alpha}(x)$ lies below $\phi_p^{\alpha}(x)$ and touches it at exactly one point $t^{3\alpha}$. Moreover, since $v(x)$ is a convex function less than $\phi_p^{\alpha}(x)$, we have $\hat{\phi}_p^{\alpha}(x) \geq \nu(x) \geq \ell_{\alpha}(x)$.
So, $\frac{\beta}{6}$ is a subdifferential of $\hat{\phi}_p^{\alpha}(x)$ and $\sup [\frac{\beta}{6}u^{3\alpha} - \phi_p(u)]$ is uniquely maximized at $t$.
The proof is complete.
\end{proof}

\begin{remark} \label{remark:param_edgeNoOpt}
In Prop \ref{prop:edgeNoOpt}, the critical value, $\tilde{p}= \frac{e^{-1/2}}{1+e^{-1/2}} \approx 0.3775$, corresponds to $h_{\tilde{p}} = -1/2$.
We see from Figure \ref{fig:replicaphase} that the conditions of the proposition are attained when if $p<\tilde{p}$ and $(p,t)$ is in the region above the red dotted line intersected with the replica symmetric phase. In this region, we have $G(t)<0$, where $G$ is defined at \eqref{eq: G(t)}. The edge tilt $\ProbQ^{h_t,0}_n$ does not produce an optimal estimator for the parameters in this region.
\end{remark}


\section{Sampling from a conditioned Gibbs measure}

For exponential random graphs with $\alpha=1$, the Glauber dynamics is known to have an exponential mixing time $\mathcal{O}(e^{n})$ when the variational form \eqref{eq: var form alpha=1} has multiple local maxima \cite{BhaBreSly08}.
When considering a triangle tilt whose variational form has multiple local maxima, the slow mixing is one reason to preclude its feasibility as an importance sampling tilt. Another reason to avoid this tilt is because the global maximum of the variational form may not occur at $t$, even though its have a local maximum at $t$ by definition. Due to the second reason, such a tilt may produce a large number of samples with an over- or under-abundance of triangles, where the triangle density is determined by the global maximum of the variational form, rather than by the local minimum at $t$. This leads to a poor estimator that is not optimal and has large variance.

We propose to circumvent these problems by modifying the triangle tilt so that the sampled graphs are restricted to the subregion of the state space that has just the `right' number of triangles.

\subsection*{Conditioned Gibbs measure}
Given a set $A\subset \calW$, the exponential random graph conditioned on $A$, denoted $\calG_{n,A}^{h,\beta,\alpha}$ has the conditional Gibbs measure
\begin{align*}
  \tilde{\ProbQ}_{n,A}^{h,\beta,\alpha}(X) \propto \begin{cases}
    e^{n^2\calH(X)}, &\text{if }X\in A\\
    0 &\text{if } X\notin A
  \end{cases}
\end{align*}
where the Hamiltonian $\calH(X)$ is defined in \eqref{eq: Gibbs  measure}. The asymptotic behaviour of the free energy of the conditional Gibbs measure,
\begin{align*}
  \tilde{\psi}_{n,A}^{h,\beta,\alpha} = \frac{1}{n^2} \log \sum_{X\in A} e^{n^2\calH(X)},
\end{align*}
is described in the following proposition, which follows from a direct modification of \cite[Theorems 3.1, 3.2]{ChaDia11}.

\begin{proposition} \label{prop: Analogues of ChaDai theorems}
For any bounded continuous mapping $\calH: \calW \mapsto \Rm$, and any closed subset $A \subset \calW$, let $\tilde{\psi}_{n,A} = \tilde{\psi}_{n,A}^{h,\beta,\alpha}$ as above. 
Then 
    \begin{equation} \label{eq: A_J's free energy general Hamiltonian}
      \lim_{n\rightarrow\infty} \tilde{\psi}_{n,A} = \sup_{f\in A} [\calH(f) - \calI(f)].
    \end{equation}
    
    Moreover, if the variational form is maximized on the set $\tilde{\calF} \subset A$, then the corresponding conditioned exponential random graph is asymptotically indistinguishable from $\tilde{\calF}$.
\end{proposition}

As a consequence of Proposition \ref{prop: Analogues of ChaDai theorems}, an argument akin to the proof of Theorem \ref{thm: Exponential graph free energy} implies that for $(p,t)$ in the replica symmetric phase, the conditioned Gibbs measure \eqref{eq: conditioned Gibbs measure} conditioned on $A_r$ has free energy given by
\begin{align}
  \lim_{n\rightarrow\infty} \tilde{\psi}_{n,A_r} 
  = \sup_{0\leq u\leq r} \left[ \frac{h_p}{2} u + \frac{\beta}{6} u^3 - I(u) \right] .
\end{align}
By choosing $r$ so that the supremum is attained at $t$, the corresponding exponential random graph conditioned in $A_r$ is asymptotically indistinguishable from the \ER graph $\calG_{n,t}$. Thus, the necessary condition for asymptotic optimality of the importance sampling estimator is satisfied.

\subsection*{Importance sampling using the conditioned Gibbs measure}
The importance sampling scheme based on the conditioned Gibbs measure $\tilde{\ProbQ}_{n,A}^{h,\beta,\alpha}$ gives the estimator
\begin{equation} \label{eq: Q_Ar conditioned estimator}
  \hat{\nu}_{n} = 
  \frac{1}{K} \sum_{k=1}^K {\bf1}_{\calW_t} (\tilde{X_k}) 
  \frac{d\tilde{\ProbP}_{n,p,A_r}}{d\tilde{\ProbQ}_{n,A_r}^{h,\beta,\alpha}} (\tilde{X_k}),
\quad \text{where } \tilde{X}_k \sim \text{ i.i.d. } \tilde{\ProbQ}_{n,A_r}^{h,\beta,\alpha}
\end{equation}
where $\tilde{\ProbP}_{n,p,A_r}$ is the measure of the \ER graph conditioned on $A_r$.
Note that $\hat{\nu}_{n}$ is an unbiased estimator for $\nu_{n} = \tilde{\ProbP}_{n,p,A_r}(\calW_t)$, but it is a biased estimator for $\mu_n= \ProbP_{n,p}(\calW_t)$. 
The bias can be corrected by 
\begin{align*}
  \hat{\mu}_{n} = \hat{\nu}_{n} \cdot \ProbP_{n,p}(A_r) + \ProbP_{n,p}(\calW_t \cap A_r^c), 
\end{align*}
but the two probabilities on the RHS are not be easily computable or estimated.
Nonetheless, the choice of the set $A_r$ ensures the bias is small and vanishes exponentially faster than the small probability  $\mu_n$. In fact, standard computations give that $\ProbP_{n,p}(A_r) \rightarrow 1$ as $n\rightarrow\infty$, and
\begin{equation*}
\lim_{n\rightarrow\infty} \frac{1}{n^2} \log \ProbP_{n,p} \big( \calW_t \cap A_r^c \big) 
    \lneq - \inf_{f\in\calW_t} [I_p(f)] .
\end{equation*}

The asymptotic optimality of the importance sampling scheme is stated in the following result.

\begin{corollary} \label{cor: condition asymp optimal}
  Given $(p,t)$ in the replica symmetric phase, consider the conditioned triangle tilt defined by the Gibbs measure $\ProbQ_{n,A_r}^{h_p,\beta_p,1}$ in \eqref{eq: conditioned Gibbs measure}, conditioned on $A_r$ in \eqref{eq: A_r} with $p<t<r$. The importance sampling scheme based on this conditioned triangle tilt is asymptotically optimal.
\end{corollary}

\subsection*{Choosing the set $A_r$.}

We motivate the choice of the set $A_r$ in the Example from Section \ref{sec:condGibbsmeas}, with $p=0.2, t=0.3$ in the replica symmetric phase.
We had noted that for the triangle tilt with Gibbs measure $\ProbQ_n^{h_p,\beta_p,1}$, the variational form $V(u) = \frac{h_p}{2} u + \frac{\beta_p}{6} u^3 - I(u)$ has multiple local maxima. This is illustrated in Figure \ref{fig: phasecurve} (inset), where $t=0.3$ is a local maximum but not a global maximum, whereas $u^* \approx 0.989$ is the global maximum.
Without conditioning, the exponential graph $\calG_{n}^{h_p,\beta_p,1}$ has a mean triangle density of $(u^*)^3$, much greater than the desired triangle density of $t^3$; moreover, successive samples in the Glauber dynamics take exponentially long time to move from the region with a high triangle density $(u^*)^3$ to the region with a lower triangle density $t^3$. The effect of conditioning on the set $A_r$ is to cap the triangle density at $r^3$, and ensure faster mixing of the Glauber dynamics. Thus, one convenient choice of $r$ is to take $r\approx0.4272$ to be the local minimum of $V(u)$ which separates the two local maxima. Then, $t$ is the unique global maximum on the interval $[0,r]$ and the conditioned Gibbs measure has a mean triangle density of $t^3$.

Conditioning the Gibbs measure leads to a significant reduction in the asymptotic log second moment of the importance sampling estimator. 
This reduction is best illustrated by considering, besides the triangle tilt itself, the family of Gibbs measures with $h=h_p,\alpha=1$ and varying $\beta>0$. 
Figure \ref{fig: phasecurve} illustrates that as $\beta$ is increased from 0 up to a transition point $\beta \approx 4.76$, the variational form $V(u;\beta) = \frac{h_p}{2}u + \frac{\beta}{6} u^{3\alpha} - I(u)$ has a global maximum within the range $[0.2,0.3]$. 
For $\beta> 4.76$, the global maximum jumps up into the range $[0.9,1]$, so that the exponential random graph transitions from a regime of low edge density to one of high edge density. 
Near to the transition point, there is a range of $\beta$ for which $V(u;\beta)$ has two local maxima and one local minimum. Observe from the figure inset that the triangle tilt with $\beta=\beta_p$ lies in this range. 
It is for this range of $\beta$ that applying the conditioned Gibbs measure will lead to a reduction in the asymptotic log second moment. 
Figure \ref{fig: conditioning bounds} shows the asymptotic log second moment, both with and without conditioning of the Gibbs measure.
For each $\beta$, the threshold $r$ is chosen as the local minimum of $V(u;\beta)$. We see that conditioning the Gibbs measure significantly reduces the asymptotic log second moment, and the conditioned triangle tilt is asymptotically optimal. This is corroborated by the numerical simulations presented in Section \ref{sec:condGibbsmeas}. 
In contrast, when no conditioning is performed, the IS estimator exhibits a sharp decline in performance when $\beta$ is increased beyond the transition point at $\beta \approx 4.76$.

\begin{figure}
  \centering
  \includegraphics[width=.6\linewidth]{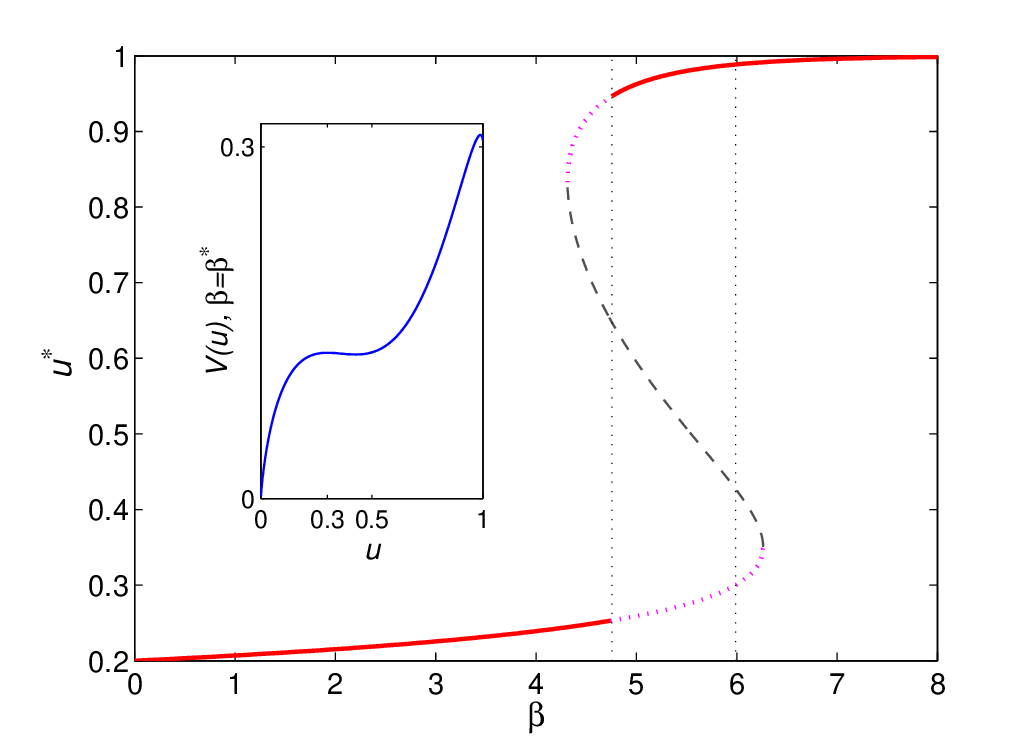}
  \caption{The phase curve denotes the values of the stationary points of the variational form $V(u) = \frac{h}{2}u + \frac{\beta}{6} u^{3\alpha} - I(u)$, as $\beta$ varies, and given $\alpha=1$, $h_p=\log\frac{p}{1-p}$, $p=0.2$. The red solid line denotes when the stationary point is a global maximum of $V(u)$; the red dotted line denotes the local maximum; the blue dashed line denotes the local minimum.
At the phase transition point at $\beta \approx 4.76$, the maximum of the variational form jumps from $u^*\approx0.253$ to $u^*\approx0.947$.
The inset shows the function $V(u)$ for $\beta = \beta^* \approx 5.99$ attaining a local maximum at $t=0.3$ and global maximum at $u^*\approx 0.989$.
}
  \label{fig: phasecurve}
\end{figure}

\begin{figure}
  \centering
  \includegraphics[width=.6\linewidth]{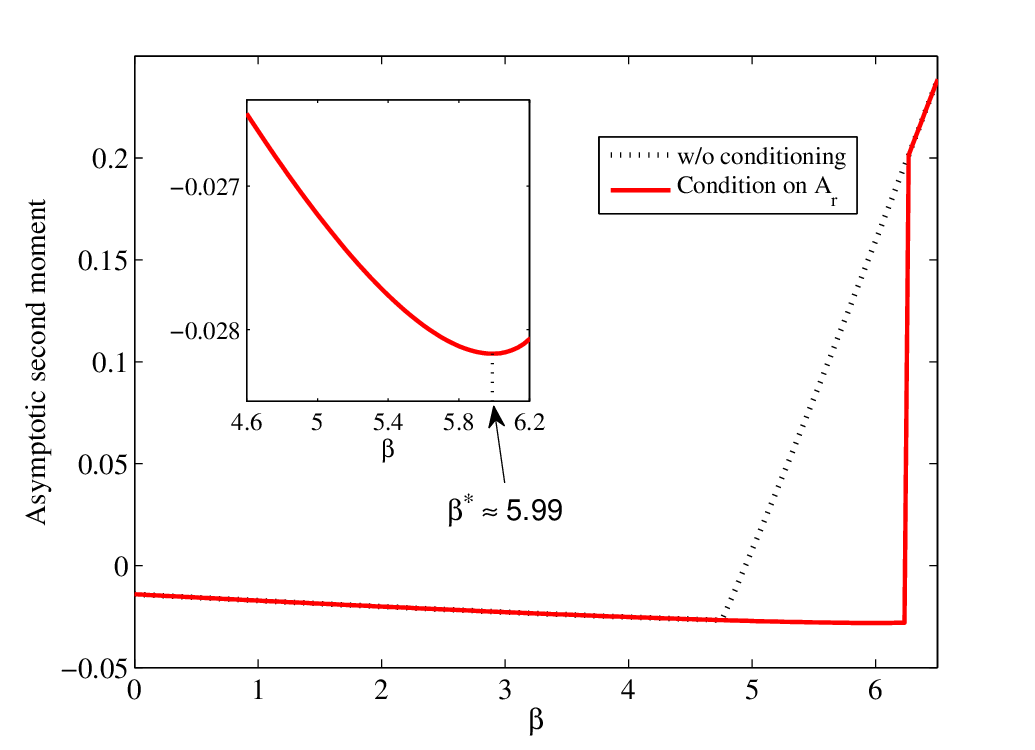}
  \caption{A plot of the asymptotic second moment, $\lim_{n\rightarrow\infty} \frac{1}{n^2} \log \Mean_{\tilde{\ProbQ}} [\hat{q}_{n,A_r}^2] $, of the importance sampling estimator based on the conditioned Gibbs tilt for fixed $h=h_p$ and varying $\beta$.
The insert is a zoom-in to show that the smallest variance is attained at $\beta=\beta^{\ast}$. 
The dotted line shows the rapid deterioration of the asymptotic second moment of the estimator without the use of conditioning.
Parameters used are $p=0.2$ and $t=0.3$.}
  \label{fig: conditioning bounds}
\end{figure}%
%


\section{Auxiliary lemmas and proofs}

We present a lemma on the asymptotic indistinguishability of an exponential random graph from a minimal set $\calF^*$, as well as the proof of Proposition \ref{prop: expo graph mean behaviour}.

\begin{lemma} \label{lem: minimalF^*}
  \begin{enumerate}[(i)]
    \item Given $(p,t)$, let $\calF^*$ be the set of functions that minimize the LDP rate function, $\inf_{f \in \calW_t} [\calI_p(f)]$ in \eqref{eq: LDP rate runction}. Then $\calF^*$ is the minimal set that the \ER graph $\calG_{n,p}$ conditioned on $\set{\calT(f) \geq t^3}$ is asymptotically indistinguishable from.

    \item Given $(h,\beta,\alpha)$, let $\calF^*$ be the set of functions that maximize $\sup_{f \in \calW} [\calH(f) - \calI(f)]$. Then $\calF^*$ is the minimal set that the exponential random graph $\calG_{n}^{h,\beta,\alpha}$ is asymptotically indistinguishable from.
  \end{enumerate}
\end{lemma}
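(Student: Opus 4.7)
The plan is to prove both parts via the same two-step template. The concentration statement---that $\calG_n$ is asymptotically indistinguishable from $\calF^*$---follows directly from the literature: Theorem 3.1 of \cite{ChaVar11} for part (i), and Theorem 3.2 of \cite{ChaDia11} for part (ii). The substantive task is \emph{minimality}: no proper closed subset $\calF' \subsetneq \calF^*$ can serve. The idea, identical in both parts, is to fix $f^* \in \calF^* \setminus \calF'$, set $r := \delta_\square(f^*, \calF') > 0$ (positive because $\calF'$ is closed in the compact metric space $(\calW, \delta_\square)$), consider the ball $V := \{f : \delta_\square(f, f^*) < r/3\}$ (which by the triangle inequality lies inside $\{\delta_\square(\cdot, \calF') > r/3\}$), and show via an LDP lower bound that $\ProbP(\calG_n \in V)$ does not decay exponentially in $n^2$. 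This contradicts asymptotic indistinguishability from $\calF'$ at the scale $\epsilon_1 = r/3$.

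For part (i), let $\mu_n$ denote the law of $\calG_{n,p}$ conditioned on $\calW_t$. The LDP of \cite{ChaVar11} yields $\liminf \frac{1}{n^2}\log \ProbP(\calG_{n,p} \in U) \geq -\calI_p(g)$ for any open $U$ and any $g \in U$; the goal is to find an open $U \subset V \cap \calW_t$ containing some $g$ with $\calI_p(g)$ close to $\phi(p,t)$, so that combining with $\frac{1}{n^2}\log \ProbP(\calW_t) \to -\phi(p,t)$ gives $\liminf \frac{1}{n^2}\log \mu_n(V) \geq 0$. The subtlety is that $f^*$ typically lies on the boundary $\{\calT = t^3\}$, so setting $g = f^*$ directly need not place $g$ in an open subset of $\calW_t$. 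To fix this I would perturb by $f_\varepsilon := (1-\varepsilon)f^* + \varepsilon \cdot \mathbf{1}$ for small $\varepsilon > 0$; provided $f^*$ is not a.e.\ equal to $\mathbf{1}$ (otherwise $\calF^* = \{\mathbf{1}\}$ and minimality is vacuous), $f_\varepsilon > f^*$ on a set of positive measure, so $\calT(f_\varepsilon) > t^3$. By continuity of $I_p$ on $[0,1]$ and dominated convergence, $\calI_p(f_\varepsilon) \to \calI_p(f^*) = \phi(p,t)$ and $\delta_\square(f_\varepsilon, f^*) \to 0$. Taking $\varepsilon$ small enough that $\delta_\square(f_\varepsilon, f^*) < r/6$ and applying the LDP lower bound to the open set $U := \{f : \delta_\square(f, f_\varepsilon) < r/6\} \cap \{\calT > t^3\} \subset V \cap \calW_t$ at the point $g = f_\varepsilon$ yields the conclusion after letting $\varepsilon \to 0$.

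For part (ii) no constraint perturbation is required. From the representation $\ProbQ_n^{h,\beta,\alpha}(X) = e^{n^2(\calH(X) - \psi_n^{h,\beta,\alpha})}$, the LDP for $\ProbP_{n,1/2}$, and the limit $\lim_n \psi_n^{h,\beta,\alpha} = \sup_\calW[\calH - \calI]$ from Theorem 3.1 of \cite{ChaDia11}, a standard tilting calculation using continuity of $\calH = \tfrac{h}{2}\calE + \tfrac{\beta}{6}\calT^\alpha$ in the cut metric gives the LDP lower bound
\[
\liminf_{n \to \infty} \frac{1}{n^2}\log \ProbQ_n^{h,\beta,\alpha}(\calG_n \in V) \;\geq\; \calH(f^*) - \calI(f^*) - \sup_{\calW}[\calH - \calI] \;=\; 0
\]
for any $f^* \in V \cap \calF^*$. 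Applied to $V = \{f : \delta_\square(f, f^*) < r/3\}$ with $f^* \in \calF^* \setminus \calF'$, this contradicts asymptotic indistinguishability from $\calF'$. The only real obstacle in the whole argument is the boundary perturbation in part (i), handled cleanly by the convex combination with $\mathbf{1}$; everything else is a routine combination of LDP upper and lower bounds on appropriate open and closed sets.
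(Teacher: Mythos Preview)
Your proposal is correct and follows the same overall strategy as the paper: cite the concentration half from \cite{ChaVar11} and \cite{ChaDia11}, then establish minimality by showing that for any $f^*\in\calF^*\setminus\calF'$ the LDP lower bound forces $\ProbP(\calG_n\in B(f^*,\epsilon))$ not to decay exponentially in $n^2$. Two points of comparison are worth noting. First, in part (i) you are actually more careful than the paper: the paper asserts that $(\calF_\eps\cap\calW_t)^\circ$ contains an element of $\calF_0$, which is problematic since minimizers sit on the boundary $\{\calT=t^3\}$; your explicit perturbation $f_\varepsilon=(1-\varepsilon)f^*+\varepsilon\cdot\mathbf 1$ is exactly what is needed to push into the interior while keeping $\calI_p(f_\varepsilon)\to\calI_p(f^*)$, and it cleanly justifies the equality of infima the paper uses. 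Second, in part (ii) the paper proceeds by discretizing the range of $\calH$ and invoking the counting estimate $\liminf n^{-2}\log|U_n|\ge -\inf_U\calI$ from \cite{ChaDia11}, whereas you obtain the same lower bound via a Varadhan-type tilting of the $\ProbP_{n,1/2}$ LDP combined with $\psi_n\to\sup_\calW[\calH-\calI]$; the two routes are equivalent and equally short.
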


\begin{proof}
The proofs of asymptotic indistinguishability of $\calF^*$ was shown in \cite[{Theorem 3.1}]{ChaVar11} for (i) and \cite[{Theorem 3.22}]{ChaDia11} for (ii).
The proofs naturally extend to give the minimality of $\calF^*$, and we state them here for the record.

Observe that for any random graph $\calG_{n}$ that is asymptotically indistinguishable from a set $\calF^*$, to show that $\calF^*$ is minimal, it suffices to show that, for any relatively open non-empty subset $\calF_0 \subset \calF^*$ such that $\calF^* \setminus \calF_0$ is non-empty, there exists $\epsilon>0$ such that 
\begin{equation} \label{eq: minimal F}
\liminf_{n \to \infty} \frac{1}{n^2} \log 
\ProbP( \delta_{\square}(\calG_{n}, \calF^*\setminus\calF_0) > \epsilon ) 
= 0.
\end{equation}

Let $\calF_0 \subset \calF^*$ be any relatively open non-empty subset, with $\calF^*\setminus \calF_0$ non-empty.
Denote, for $\eps > 0$, 
\begin{equation*}
  \calF_\eps = \set{ f \in \calW \; | \;\; \delta_\square(f, \calF^*\setminus\calF_0) > \eps }.
\end{equation*}

(i)
Since $\calF_0$ is relatively open in $\calF^*$, $\delta_\square (f, \calF^* \setminus \calF_0) > 0 $ for any $f \in \calF_0$.
So, there exists an $\eps>0$ sufficiently small such that $(\calF_\eps \cap \calW_t)^\circ$ contains at least one element of $\calF_0$.
($A^\circ$ denotes the interior of $A$.)
It follows that 
\begin{equation*}
  \inf_{f \in (\calF_\eps \cap \calW_t)^\circ} [\calI_p(f)]
  = \inf_{f \in \calW_t} [\calI_p(f)].
\end{equation*}
Since 
\begin{equation*}
  \ProbP(\calG_{n,p} \in \calF_\eps \,|\, \calG_{n,p} \in \calW_t) = \frac{\ProbP(\calG_{n,p} \in \calF_\eps \cap \calW_t)}{\ProbP(\calG_{n,p} \in \calW_t)},
\end{equation*}
from the large deviation principle in \cite[{Theorem 2.3}]{ChaVar11} implies that
\begin{align*}
  &\liminf_{n\rightarrow\infty} \frac{1}{n^2} \log \ProbP(\calG_{n,p} \in \calF_\eps \,|\, \calG_{n,p} \in \calW_t) \\
  &\qquad = \liminf_{n\rightarrow\infty} 
  \frac{1}{n^2} \log \ProbP(\calG_{n,p} \in \calF_\eps \cap \calW_t) - 
  \frac{1}{n^2} \log \ProbP(\calG_{n,p} \in \calW_t) \\
  &\qquad \geq - \inf_{f \in (\calF_\eps \cap \calW_t)^\circ} [\calI_p(f)]
  + \inf_{f \in \calW_t} [\calI_p(f)] \\
  &\qquad = 0.
\end{align*}

(ii) 
Since $\calF_0$ is relatively open in $\calF^*$, there exists an $\eps>0$ sufficiently small such that $\calF_\eps^\circ$ contains at least one element of $\calF_0$, and
\begin{equation*}
  \inf_{f \in \calF_\eps^\circ} [\calH(f) - \calI(f)]
  = \inf_{f \in \calW} [\calH(f) - \calI(f)].
\end{equation*}

Since the Hamiltonian $\calH$ is bounded, for any $\eta>0$, there is a finite set $A \subset \Rm$ such that the intervals $\{(a,a+\eta), a \in A \}$ cover the range of $\calH$.
Let $\calF_\eps^a = \calF_\eps \cap \calH^{-1}([a,a+\eta])$, and let $\calF_\eps^{a,n} = \calF_\eps^a \cap \Omega_n$ be the functions corresponding to a simple finite graph.
Then
\begin{align*}
  \ProbP(\calG_{n} \in \calF_\eps)
  \geq
  \sum_{a\in A} e^{n^2(a-\psi_n)} |\calF_\eps^{a,n}|
  \geq
  e^{-n^2\psi_n} \sup_{a\in A} \left[ e^{n^2a} |\calF_\eps^{a,n}| \right]
\end{align*}
and
\begin{align*}
  \frac{1}{n^2} \log \ProbP(\calG_{n} \in \calF_\eps)
  &\geq
  - \psi_n + 
  \sup_{a \in A} [ a - \frac{1}{n^2} \log |\calF_\eps^{a,n}| ]  .
\end{align*}
By an observation in \cite[{Eqn. (3.4)}]{ChaDia11}, for any open set $U \subset \calW$, and $U_n = U \cap \Omega_n$,
\begin{equation*}
  \liminf_{n\rightarrow\infty} \frac{1}{n^2} \log |U_n| \geq
  - \inf_{f \in U} [\calI(f)]  .
\end{equation*}
Then, since 
\begin{equation*}
  \sup_{f\in \calF_\eps^a}  [\calH(f) - \calI(f)] \leq 
  \sup_{f\in \calF_\eps^a}  [a+ \eta - \calI(f)] =
  a+ \eta - \inf_{f\in \calF_\eps^a}  [\calI(f)]
\end{equation*}
we have that 
\begin{align*}
  \liminf_{n\rightarrow\infty} \frac{1}{n^2} \log \ProbP(\calG_{n} \in \calF_\eps)
  &\geq
  - \sup_{f \in \calW} [\calH(f) - \calI(f)] + \sup_{a \in A} [a -   \inf_{f \in (F_\eps^a)^\circ} [\calI(f)]] \\
  &\geq 
  - \sup_{f \in \calW} [\calH(f) - \calI(f)] + \sup_{a \in A} \sup_{f \in (F_\eps^a)^\circ} [\calH(f) - \calI(f)]  - \eta \\
  &\geq 
  - \sup_{f \in \calW} [\calH(f) - \calI(f)] + \sup_{f \in F_\eps^\circ} [\calH(f) - \calI(f)]  - \eta\\
  &= 0   .
\end{align*}

The proof is complete.
\end{proof}

\vskip10pt\noindent
\textbf{Proof of Proposition \ref{prop: expo graph mean behaviour}.}
\begin{proof}
Let $\epsilon_1> 0$ be arbitrary. As in Theorem \ref{thm: Exponential graph free energy}, let $\calF^*_{v^*}$ be the set of minimizers of $\inf_{f\in \partial \calW_{v^*}} [\calI_q(f)]$.
\begin{align*}
  &\Mean_{\ProbQ_n} |\calT(X) - (v^*)^3|\\
  &\qquad =
  \int_{\{\delta_\square (X, \calF^*_{v^*}) > \epsilon_1\}} |\calT(X) - (v^*)^3| \,d\ProbQ_n(X) +
  \int_{\{\delta_\square (X, \calF^*_{v^*}) \leq \epsilon_1\}} |\calT(X) - (v^*)^3| \,d\ProbQ_n(X) \\
  &\qquad = (I) + (II)
\end{align*}
(We have dropped the superscripts, $\ProbQ_{n} = \ProbQ_{n}^{h_q,\beta,\alpha}$.) We estimate the two terms.
To estimate $(I)$, by \cite[{Theorem 4.2}]{ChaDia11}, there exists $C,\epsilon_2>0$ such that for sufficiently large $n$
\begin{align*}
  \ProbQ_n (\delta_\square (X, \calF^*_{v^*}) > \epsilon_1)
  \leq C_2 e^{-n^2\epsilon_2}.
\end{align*}
Since $|\calT(X) - (v^*)^3| \leq 1$,
\begin{align*}
  (I) \leq
  \ProbQ_n (\delta_\square (X, \calF^*_{v^*}) > \epsilon_1)
  \leq C_2 e^{-n^2\epsilon_2}.
\end{align*} 
To estimate $(II)$, for any $X \in \{\delta_\square (X, \calF^*_{v^*}) \leq \epsilon_1\}$, let the function $f^*_X \in \calF^*_{v^*}$ be such that $\delta_\square (X, f^*_X) \leq \epsilon_1$. Note that $\calT(f^*_X) = (v^*)^3$ by definition.
By Lipschitz continuity of the mapping $f \mapsto \calT(f)$ under the cut distance metric $\delta_{\square}$ \cite[{Theorem 3.7}]{Borgsetal08},
\begin{equation*}
  |\calT(X) - (v^*)^3| = |\calT(X) - \calT(f^*_X)| 
  \leq C_1 \delta_\square(X, f^*_X) \leq C_1 \epsilon_1.
\end{equation*}
So
\begin{align*}
  (II)
  &= \int_{\{\delta_\square (X, \calF^*_{v^*}) \leq \epsilon_1\}} |\calT(X) - (v^*)^3| \,d\ProbQ_n(X) \\
  &\leq C_1 \epsilon_1 \ProbQ_n (\delta_\square (X, \calF^*_{v^*}) \leq \epsilon_1) \\
  &\leq C_1 \epsilon_1 .
\end{align*}
Hence, 
\begin{equation*}
  \lim_{n\rightarrow \infty} \Mean_{\ProbQ_n} |\calT(X) - (v^*)^3| 
  \leq \lim_{n\rightarrow \infty} C_2 e^{-n^2\epsilon_2} + C_1\epsilon_1 
  = C_1\epsilon_1 .
\end{equation*}
Since $\epsilon_1$ is arbitrary, \eqref{QTmean2} follows.

If $(q,v^*)$ belongs to the replica symmetric phase, we have by Theorem \ref{thm: Exponential graph free energy} that $\calF^*_{v^*}$ consists uniquely of the constant function $f^*(x,y) \equiv v^*$. 
Then since $\calE(f^*) = v^*$, the above proof follows identically to yield that 
\begin{equation*}
  \lim_{n\rightarrow \infty} \Mean_{\ProbQ_n} |\calE(X) - v^*| 
  \leq \lim_{n\rightarrow \infty} C_2 e^{-n^2\epsilon_2} + C\epsilon_1 
  = C \epsilon_1 .
\end{equation*}
\end{proof}


\bibliographystyle{plain}
\bibliography{ISbib}

\end{document}